\numberwithin{equation}{section}
\newtheorem{theoremcounter}{theoremcounter}[section]
\theoremstyle{plain}
\newtheorem{corollary}[theoremcounter]{Corollary}
\newtheorem{lemma}[theoremcounter]{Lemma}
\newtheorem{proposition}[theoremcounter]{Proposition}
\newtheorem{theorem}[theoremcounter]{Theorem}
\newtheorem{example}[theoremcounter]{Example}
\newtheorem{question}[theoremcounter]{Question}
\newtheorem{introtheorem}{Theorem}
\newtheorem{introcorollary}[introtheorem]{Corollary}
\newtheorem{introdefinition}[introtheorem]{Definition}
\theoremstyle{definition}
\newtheorem{definition}[theoremcounter]{Definition}
\theoremstyle{remark}
\newtheorem{remark}[theoremcounter]{Remark}
\renewcommand{\theenumi}{(\roman{enumi})}
\renewcommand{\labelenumi}{\theenumi}
\newcommandx{\unsure}[2][1=]{\todo[linecolor=red,backgroundcolor=red!25,bordercolor=red,#1]{#2}}
\newcommandx{\change}[2][1=]{\todo[linecolor=blue,backgroundcolor=blue!25,bordercolor=blue,#1]{#2}}
\newcommandx{\info}[2][1=]{\todo[linecolor=OliveGreen,backgroundcolor=OliveGreen!25,bordercolor=OliveGreen,#1]{#2}}
\newcommandx{\improvement}[2][1=]{\todo[linecolor=Plum,backgroundcolor=Plum!25,bordercolor=Plum,#1]{#2}}
\newcommand{\Cs}{\mathrm{C}^*}
\newcommand{\Csr}{\mathrm{C}^*_{\text{red}}}
\newcommand{\Csm}{\mathrm{C}^*_{\text{max}}}
\newcommand{\N}{\mathcal N}
\newcommand{\ra}{\rightarrow}
\newcommand{\bF} {\mathbb F}
\newcommand{\bb} {\mathbb b}
\newcommand{\bC} {\mathbb C}
\newcommand{\bN} {\mathbb N}
\newcommand{\bR} {\mathbb R}
\newcommand{\bQ} {\mathbb Q}
\newcommand{\bZ} {\mathbb Z}
\newcommand{\cM} {\mathcal M}
\newcommand{\cB}{\mathcal B}
\newcommand{\cS}{\mathcal S}
\newcommand{\Tr}{\mathrm {Tr}}
\newcommand{\PSL}{\mathrm{PSL}}
\newcommand{\SL}{\mathrm{SL}}
\newcommand{\GL}{\mathrm{GL}}
\newcommand{\authors}{Sanaz Pooya and Hang Wang}
\renewcommand{\title}{Higher Kazhdan projections and delocalised $\ell^ 2$-Betti numbers}
\begin{document}
	

	\thispagestyle{empty}
	
	\noindent
	\begin{minipage}{\linewidth}
		\begin{center}
			\textbf{\Large \title} \\
			\authors    
		\end{center}
	\end{minipage}
	
	\renewcommand{\thefootnote}{}
	\footnotetext{
		\textit{MSC classification: 46L80, 19D55, 20F65}
	}
	
	\vspace{2em}
	\noindent
	\begin{minipage}{\linewidth}
		\textbf{Abstract}.  We provide an explicit description of the K-classes of higher Kazhdan projections in degrees greater than 0 for specific free product groups and Cartesian product groups.   
		Employing this description, we obtain new calculations of Lott's delocalised $\ell^2$-Betti numbers for groups. Notably, we establish the first non-vanishing results for infinite groups.	 
		
	\end{minipage}

	
	\section{Introduction}
	\label{sec:introduction}
	Within operator algebras K-theory holds importance as it encapsulates extensive information regarding a $\Cs$-algebra. In view of its richness in information, computational challenges persist.
	In light of these challenges, understanding specific K-classes can provide us with partial information that may lead to improved understanding of an object it is associated with. 
	In the framework of group $\Cs$-algebras there are numerous ways to produce K-classes that can be analysed. 
	The averaging projection in the $\Cs$-algebra of a finite group is one of the most elementary such examples. More involved K-classes can be produced 
	as Bott projections and their various generalisations. Also K-classes arising from generalised Fredholm operators by applying the Baum-Connes assembly map, to certain extend, be considered as more concrete than general K-classes of group $\Cs$-algebras.
	A rather different kind of K-classes was recently introduced under the name of higher Kazhdan projections in \cite{linowakpooya2020}. They can provide a new way to obtain concrete K-classes. 
	Their construction was motivated by that of Kazhdan projections associated to property (T) groups. However, in the generality in which they were defined, they can also be defined for non-property (T) groups.	
	Let us briefly recall the construction and discuss to what extent these classes could be understood so far.
		
	Let $G$ be a discrete group of type $F_{n+1}$. For a suitable unitary representation $(\pi, \mathcal H)$ of $G$, the first author, Li and Nowak defined a sequence of higher Kazhdan projections $(p_n)_{n\ge 0}$. They lie in matrices over $\mathcal B(\mathcal H)$. 
	By construction, $p_n$ is the projection onto harmonic $n$-cochains in a suitable chain complex computing cohomology of $G$ with coefficients in $(\pi, \mathcal H)$, which relates it directly to reduced group cohomology. 
	In order to view $p_n$ inside matrices over $\Cs_{\pi}(G)$ we need a spectral gap for the $n$-th Laplacian $\Delta_n$. This is equivalent to that the $n$-th and $n+1$-st cohomology with coefficients in $\mathcal H$ are reduced  \cite{badernowak2015}. 	
	
	If $\pi$ is the universal representation of a property (T) group $G$, then  $p_0 \in \Cs_{\mathrm {max}} (G)$ is the classical Kazhdan projection.			
	The case where $\pi$ is chosen to be the left regular representation 
	$\lambda $ is particularly interesting, since in this case the trace of the K-class of $p_n$ is the $n$-th $\ell^2$-Betti number of the group. 
	The examples of non-abelian free groups $\mathbb F_k$ and 
	$\mathrm{PSL}(2, \mathbb Z)$ showcase different behaviors of $p_1$.
	In the example of $\mathbb F_k$, we have that $[p_1] = (k - 1)[1]$ in K-theory, while for $\mathrm{PSL}(2, \mathbb Z)$ we have that $[p_1] \notin \mathbb Z [1]$  due to the non-integral first $\ell^2$-Betti number of the group. 	In contrast to the classical Kazhdan projection which only exists when $G$ has property (T) we now saw that higher Kazhdan projections exist and exhibit different behavior in this more general setting. 
	A natural question to ask is whether it is possible to explicitly describe their K-classes. In this article we answer this question positively by explicitly describing the K-class of higher Kazhdan projections in two classes of groups: free products of finite cyclic groups including $\mathrm {PSL(2, \mathbb Z)}$ and certain Cartesian products. 
	
	By directly computing the space of harmonic  $n$-cochains we are able to obtain very concrete representatives for the K-class of higher Kazhdan projections associated with these groups.
	
	\begin{introtheorem} \label{thm A}
		(Theorem \ref{thm: Z_n*Z_m}) 
		Let $G= \mathbb Z_m \ast \mathbb Z_n$ for $m \geq 2$ and $n \geq 3$. The K-class of first higher Kazhdan projection $p_1$ in $\mathrm K_0(\Csr(G))$ is described by
		\[
			[p_1] = [1] - \left[\frac{1}{m}
			\sum_{0 \leq i < m}{s^i}\right] - 
			\left[\frac{1}{n}{\sum_{0 \leq j <n}{t^j}} \right],
		\]
		where $s$ and $t$ are generators (of order $m$ and $n$) of the cyclic groups, respectively.
	\end{introtheorem}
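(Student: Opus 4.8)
The plan is to realise $p_1$ as the harmonic projection of an explicit two-term Hilbert complex attached to the Bass--Serre tree of $G$, and then to read off its $\mathrm K_0$-class by a polar-decomposition argument, the essential input being a free-probability computation of a spectrum.

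Since $G=\mathbb Z_m\ast\mathbb Z_n$ acts on its Bass--Serre tree $T$ with trivial edge stabiliser and vertex stabilisers $\mathbb Z_m$, $\mathbb Z_n$, and $T$ is contractible, the cellular chain complex of $T$ is a length-one projective resolution $0\to\mathbb CG\to\mathbb CG\,q_m\oplus\mathbb CG\,q_n\to\mathbb C\to 0$ of the trivial module, where $q_m=\tfrac1m\sum_{0\le i<m}s^i$ and $q_n=\tfrac1n\sum_{0\le j<n}t^j$ are the averaging projections of the cyclic subgroups. Applying $\mathrm{Hom}_{\mathbb CG}(-,\ell^2 G)$ yields a finite-length Hilbert $\mathbb CG$-complex computing $\overline H^{\ast}(G;\ell^2 G)$, of the form
\[
0\longrightarrow C^0\xrightarrow{\ d^0\ }C^1\longrightarrow 0,\qquad
C^1=\ell^2(G),\qquad C^0=q_m\ell^2(G)\oplus q_n\ell^2(G),
\]
whose degree-one harmonic cochains are, by definition, the range of $p_1$. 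A direct computation with the simplicial coboundary identifies $d^0$, extended by zero to $\ell^2(G)^{\oplus 2}$, with the row matrix $(-q_m\ \ q_n)\in M_{1,2}(\mathbb CG)\subseteq M_{1,2}(\Csr(G))$, so that the degree-one Laplacian is simply $\Delta_1=d^0(d^0)^{\ast}=q_m+q_n\in\Csr(G)$, while $\Delta_0=(d^0)^{\ast}d^0$ is supported in the corner $Q M_2(\Csr(G)) Q$, with $Q=\mathrm{diag}(q_m,q_n)$ the projection onto $C^0$.

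The main step is to analyse the spectrum of $\Delta_1=q_m+q_n$. With respect to the canonical trace $\tau$, $\Csr(G)$ is the reduced free product $\Csr(\mathbb Z_m)\ast\Csr(\mathbb Z_n)$, so $q_m$ and $q_n$ are \emph{free} projections with $\tau(q_m)=\tfrac1m$ and $\tau(q_n)=\tfrac1n$. Hence the spectral distribution of $q_m+q_n$ is the additive free convolution of the two corresponding two-point measures; by the classical computation of such free convolutions (going back to Voiculescu) this distribution is an absolutely continuous part supported on a closed subinterval of $(0,2)$ together with atoms, among which $0$ occurs with mass $1-\tfrac1m-\tfrac1n$. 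The hypothesis $m\ge 2$, $n\ge 3$ is precisely what forces $\tfrac1m+\tfrac1n<1$, so that $0$ is an \emph{isolated} point of $\mathrm{spec}(\Delta_1)$: the Laplacian has a spectral gap, $p_1=\chi_{\{0\}}(\Delta_1)\in\Csr(G)$, and $\tau(p_1)=1-\tfrac1m-\tfrac1n$ recovers the first $\ell^2$-Betti number of $G$, a reassuring check. The gap forces $\mathrm{ran}\,d^0$ to be closed; moreover $d^0$ is injective, since $\ker d^0$ consists of the harmonic $0$-cochains, i.e.\ of vectors in $q_m\ell^2(G)\cap q_n\ell^2(G)$ invariant under $\langle s,t\rangle=G$, and there are none in $\ell^2(G)$ because $G$ is infinite.

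Finally, injectivity of $d^0$ together with the spectral gap makes $\Delta_0$ invertible in the corner $Q M_2(\Csr(G)) Q$; set $V:=d^0\,\Delta_0^{-1/2}\in M_{1,2}(\Csr(G))$. Then $V^{\ast}V=Q$ and $V V^{\ast}=d^0\,\Delta_0^{-1}(d^0)^{\ast}$ is the orthogonal projection onto $\overline{\mathrm{ran}\,d^0}=(\ker\Delta_1)^{\perp}=1-p_1$. Thus $1-p_1$ is Murray--von Neumann equivalent to $Q$ in $M_2(\Csr(G))$, whence $[1-p_1]=[Q]=[q_m]+[q_n]$ in $\mathrm K_0(\Csr(G))$; since $C^1=\ell^2(G)$ is a single copy of the standard module, $[p_1]=[1_{C^1}]-[1-p_1]=[1]-[q_m]-[q_n]$, which is the assertion. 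The one point requiring care is that the complex entering the general definition of $p_1$ may differ from the Bass--Serre complex above; but any two finite-length Hilbert $\mathbb CG$-complexes computing $\overline H^{\ast}(G;\ell^2 G)$ are $\mathbb CG$-chain homotopy equivalent, and one checks that this leaves the $\mathrm K_0$-class of the harmonic projection unchanged (alternatively, the short exact sequence $0\to C^0\to C^1\to p_1\Csr(G)\to 0$ of finitely generated projective modules already yields $[p_1]=[C^1]-[C^0]$ directly). The genuinely substantive ingredient, and the source of the arithmetic hypothesis on $m$ and $n$, remains the free-probability identification of $\mathrm{spec}(q_m+q_n)$.
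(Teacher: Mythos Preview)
Your approach is correct and takes a genuinely different route from the paper's.

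The paper works with the presentation complex of $G$, obtaining a cochain complex $\ell^2(G)\to\ell^2(G)^{\oplus 2}\to\ell^2(G)^{\oplus 2}\to\cdots$ whose $\Delta_1$ is a $2\times 2$ matrix over $\mathbb CG$ with off-diagonal entries such as $(1-s^{-1})(1-t)$. It then solves $\Delta_1x=0$ explicitly, shows $\ker\Delta_1$ is conjugate (by an invertible in $\mathrm M_2(\Csr(G))$) to an auxiliary subspace $H_1\subseteq\ell^2(G)^{\oplus2}$, and establishes $[P_{H_1}]=[1]-[q_m]-[q_n]$ through a chain of hand-built orthogonal decompositions of $\ell^2(G)^{\oplus 2}$ (Lemmas~4.5--4.8). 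You bypass all of this linear algebra by using the Bass--Serre tree, which furnishes a length-one projective resolution with non-free $C^0=q_m\ell^2(G)\oplus q_n\ell^2(G)$ and collapses $\Delta_1$ to the single element $q_m+q_n$. The polar-decomposition step then gives $[1-p_1]=[Q]$ in one line, and the resulting argument visibly extends to arbitrary free products of finite groups --- a question the paper leaves open.

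Two remarks. First, the free-probability identification of $\mathrm{spec}(q_m+q_n)$ is a nice alternative justification of the spectral gap, but it is not strictly needed: the paper's Lemma~4.1 (non-amenability for reducedness of $H^1$, Shapiro for vanishing of $H^2$) already supplies the gap, and that argument applies verbatim to your shorter complex since both compute the same cohomology. Second, the one genuine obligation in your approach is the model-independence of $[p_1]$: the paper's definition is pinned to a model of $\mathrm BG$ with \emph{free} cochain modules, whereas the Bass--Serre complex uses non-free projectives and is not of that form. Your final paragraph handles this correctly --- either via chain-homotopy equivalence of projective resolutions, or more cleanly via the Euler identity $\sum(-1)^i[p_i]=\sum(-1)^i[C^i]$ combined with $p_0=0$ --- but in a written-up version this step should be stated as a lemma rather than dismissed with ``one checks''.
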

	
	Employing a product construction, we can produce higher Kazhdan projections with non-integral K-class in higher degrees. This is the content of the next theorem.
	
	\begin{introtheorem} \label{thm B}
		(Theorem \ref{thm:higher Kazh proj product group})
		Let $F$ be a finite group let $n \in \mathbb N$ and consider the product
		$G = \mathbb F_2 \times \dotsc \times \mathbb F_2 \times F$ of $n$ factors of $\mathbb F_2$ with $F$. Then the K-class of the $n$-th higher Kazhdan projection $p_n$ of $G$ satisfies		
		\[
		[p_n] = \left[\frac{1}{|F|}{\sum_{g\in F}{g}}\right].
		\]
	\end{introtheorem}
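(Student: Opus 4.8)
The plan is to reduce the computation to the individual factors of $G=\bF_2\times\dotsc\times\bF_2\times F$ by realising $p_n$ through a tensor product of chain complexes, and then to identify $p_n$ with an external product $p_1(\bF_2)\otimes\dotsb\otimes p_1(\bF_2)\otimes p_0(F)$. As coefficient representation I take the left regular representation. For each $\bF_2$ factor let $C^{(i)}_\bullet$ be the cochain complex $\ell^2(\bF_2)\xrightarrow{d^0}\ell^2(\bF_2)^2$ coming from the presentation complex $S^1\vee S^1$, and let $D_\bullet$ be a cochain complex computing $\bar H^*(F,\ell^2 F)$ obtained from a free resolution of $\bC$ over $\bC[F]$; since $F$ is finite one may take the bar resolution, which is of finite type, so the total complex $E_\bullet=C^{(1)}_\bullet\otimes\dotsb\otimes C^{(n)}_\bullet\otimes D_\bullet$ is of finite type in every degree. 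It is the cochain complex of a $K(G,1)$ of type $F_\infty$ (a product of Eilenberg--MacLane spaces), hence a legitimate complex through which to define the higher Kazhdan projections of $G$ (and the K-class is insensitive to the choice).

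I would then identify the harmonic $n$-cochains of $E_\bullet$ using two standard facts: the Hodge Laplacian of a tensor product of cochain complexes is the sum $\Delta^{E}=\sum_i\Delta^{(i)}\otimes 1+1\otimes\Delta^{D}$ of the Laplacians of the factors acting on their own tensor legs (the mixed terms cancel under the Koszul sign rule), and for positive operators $A,B$ one has $\ker(A\otimes 1+1\otimes B)=\ker A\otimes\ker B$. Iterating, the harmonic $n$-cochains of $E_\bullet$ decompose as the orthogonal sum, over bidegrees $(\alpha_1,\dotsc,\alpha_n;j)$ with $\sum_i\alpha_i+j=n$, of $\mathcal H_{\alpha_1}(C^{(1)})\otimes\dotsb\otimes\mathcal H_{\alpha_n}(C^{(n)})\otimes\mathcal H_{j}(D)$. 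Now $\mathcal H_k(\bF_2)=\bar H^k(\bF_2,\ell^2\bF_2)$ is $0$ for $k\neq 1$ (in degree $0$ because $\bF_2$ is infinite, and in degrees $\geq 2$ because $C^{(i)}_\bullet$ is concentrated in degrees $0,1$), while $\mathcal H_j(F)=\bar H^j(F,\ell^2 F)$ is $0$ for $j\neq 0$ (cohomology of the finite group $F$ vanishes in positive degrees with these coefficients) and $\mathcal H_0(F)=(\ell^2 F)^F=\bC\cdot\sum_{g\in F}g$. Hence the only surviving bidegree in total degree $n$ is $(1,\dotsc,1;0)$, so $p_n(G)$ is the projection onto $\mathcal H_1(\bF_2)^{\otimes n}\otimes\mathcal H_0(F)$; block-diagonally it equals $p_1(\bF_2)\otimes\dotsb\otimes p_1(\bF_2)\otimes p_0(F)$ on the summand $\ell^2(\bF_2)^2\otimes\dotsb\otimes\ell^2(\bF_2)^2\otimes\ell^2 F$ of $E^n$ and is $0$ on the other summands.

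To conclude I would check the spectral gap and compute the external product in K-theory. For $\bF_2$, $\Delta_0=(d^0)^*d^0=4-(s+s^{-1}+t+t^{-1})$ is bounded below by $4-2\sqrt3>0$ by Kesten's bound on the adjacency operator of $\ell^2(\bF_2)$, and $\Delta_1=d^0(d^0)^*$ has the same nonzero spectrum; for $F$ every Laplacian lives in a finite-dimensional algebra and trivially has a spectral gap. Therefore $\Delta_n^{E}$ has a spectral gap and $p_n(G)=\chi_{\{0\}}(\Delta_n^{E})$ lies in $M_N(\Csr G)$. Using $\Csr G\cong\Csr\bF_2\otimes\dotsb\otimes\Csr\bF_2\otimes\Csr F$, the class $[p_n(G)]$ is the image under the external product of $[p_1(\bF_2)]\otimes\dotsb\otimes[p_1(\bF_2)]\otimes[p_0(F)]$. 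Here $[p_1(\bF_2)]=[1]$ by the free-group computation recalled in the introduction, and $p_0(F)$ is the averaging projection $e_F=\tfrac1{|F|}\sum_{g\in F}g$ onto $(\ell^2 F)^F$, so $[p_0(F)]=[e_F]$. The external product of $[1]\otimes\dotsb\otimes[1]\otimes[e_F]$ is just $[e_F]$ regarded inside $\Csr G$ via the inclusion $F\hookrightarrow G$, namely $\bigl[\tfrac1{|F|}\sum_{g\in F}g\bigr]$, as claimed.

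The main obstacle I expect is making the tensor-product reduction rigorous at the level of $\Csr G$ rather than only at the level of von Neumann dimensions: one must verify the additivity of the Hodge Laplacian on $E_\bullet$ with the correct signs, deduce the Künneth-type decomposition of harmonic cochains together with a spectral gap for $\Delta_n^{E}$, and check that the resulting equality holds in $K_0(\Csr G)$ --- the von Neumann trace alone does not pin down $[e_F]$, as already happens for $\bF_2\times\bZ_2$, where different K-classes can share the same trace. A comparatively minor point is that $F$ has no finite classifying space, which is circumvented by the finiteness of the bar resolution of $\bC[F]$.
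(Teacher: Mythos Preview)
Your proposal is correct and follows essentially the same route as the paper: both arguments pass to the tensor-product cochain complex, use that the Laplacian of a tensor product is the sum of the factor Laplacians so that the kernel projection factors as $p_1(\bF_2)^{\otimes n}\otimes p_0(F)$, and then invoke $[p_1(\bF_2)]=[1]$ to finish. The only cosmetic difference is that the paper organises this as an induction on the number of $\bF_2$ factors (proving Murray--von Neumann equivalence at each step), whereas you treat all factors at once via the external product in $\mathrm K_0$; the content is the same.
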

	
	Besides providing a clearer picture of this new class of cohomologically defined projections, the relevance of explicitly describing their K-classes stems from their connections to Lott's delocalised $\ell^2$-Betti numbers for manifolds \cite{lott1999}. 
	\\
    For $g\in G$ we denote its conjugacy class $\langle{g}\rangle$.	
	Associated with the conjugacy class of $g \in G$ is a delocalised trace, which is the tracial functional, defined as
	\[
		\tau_{\langle g\rangle} \colon \ell^1(G) \ra \mathbb C
	, \qquad  \tau_{\langle g\rangle}(f) = \sum_{h\in {\langle g\rangle}} {f(h)}.
	\]
	  \indent Aiming to pair such delocalised traces with K-classes an obstruction can be observed for infinite conjugacy classes when $\tau_{\langle g\rangle}$
	does not necessarily extend to the $\Cs$-algebra closure in question. This problem has different remedies.	
	To overcome this problem, Lott introduced his delocalised $\ell^2$-Betti numbers as the limit of delocalised traces of heat kernel operators for closed manifolds, whose analogue for groups lies in matrices over $\ell^1(G)$.	 
	More concretely, for a group $G$ denoting by $\Delta_n$ the $n$-th combinatorial  Laplacian, the $n$-th delocalised $\ell^2$-Betti number would be described by the following expression:
	\begin{gather*}
		\beta^{(2)}_{n, \langle g\rangle}(G) = \lim_{t \to \infty} \tau_{\langle g\rangle}(e^{-t\Delta_n}).
	\end{gather*}
	However, in view of the context of higher Kazhdan projections it makes sense to adopt a more K-theoretic definition. 
	Since $\lim_{t\to \infty}e^{-t\Delta_n}$ considered as a strong limit does not always belong to $\ell^1(G)$, one would need to find a different subalgebra of $\Csr(G)$ to which these traces can extend. 	
	When $G$ is a hyperbolic group, Puschnigg has constructed suitable smooth dense subalgebras of $\Csr(G)$, which have the same K-theory as $\Csr(G)$ and to which delocalised traces extend (see \cite{puschnigg2010}). 
		
	For the purpose of this article we then adopt the following definition.
	\begin{introdefinition}
		\label{def C} 
		(Definition \ref{def:deloc l2betti num formula})
		Let $G$ be a group of type $F_{n+1}$, and $g\in G$. Assume that there is a smooth subalgebra $\mathcal S \subset \Csr (G)$ to which the delocalised trace $\tau_{\langle g\rangle}$ extends. Assume further that the K-class of $p_n$ lies in $\mathrm K_0(\Csr (G))$. We define the $n$-th delocalised $\ell^2$-Betti number of $G$ as
		\begin{gather*} 
			\beta ^{(2)}_{n, \langle g\rangle}(G) = \tau_{\langle g\rangle} ([p_n]).
		\end{gather*}
	\end{introdefinition}
	
	As a byproduct of the explicit description of higher Kazhdan projections, we establish the first calculations for non-zero delocalised $\ell^2$-Betti numbers of infinite groups. 
	
	\begin{introcorollary} 
		\label{cor D}
		(Corollary \ref{cor: deloc l2 Betti Zn*Zm})
		The delocalised $\ell^2$-Betti numbers for the group $G = \mathbb Z_m \ast \mathbb Z_n $ are
		\begin{equation*}
			\beta^{(2)}_{1, \langle{g}\rangle}(G) = 
			\begin{cases}
				1-\frac{1}{m}-\frac{1}{n} &\qquad g=e\\
				-\frac{|\langle{g}\rangle|}{m}  &\qquad g \in \mathbb Z_m \setminus \{e\}\\
				-\frac{|\langle{g}\rangle|}{n}  &\qquad g \in \mathbb Z_n \setminus \{e\}\\
				0 \qquad & \qquad  \text{otherwise} 
			\end{cases}
		\end{equation*}
		and $\beta ^{(2)}_{k, \langle{g}\rangle}(G) = 0$ for $k \neq 1$ and all $g\in G$.
	\end{introcorollary}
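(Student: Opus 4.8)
The plan is to derive the formula from the explicit description of $[p_1]$ in Theorem~\ref{thm: Z_n*Z_m} together with Definition~\ref{def:deloc l2betti num formula}, after checking the hypotheses of the latter. First I would record that $G=\mathbb{Z}_m\ast\mathbb{Z}_n$ acts properly and cocompactly on its Bass--Serre tree, with finite vertex stabilisers $\mathbb{Z}_m,\mathbb{Z}_n$ and trivial edge stabilisers; hence $G$ is a finitely presented, virtually free and in particular word hyperbolic group of type $F_\infty$, and it is infinite because $m\ge 2$, $n\ge 3$. By Puschnigg's construction there is a smooth dense subalgebra $\mathcal S\subset\Csr(G)$ with $K_*(\mathcal S)=K_*(\Csr(G))$ on which every delocalised trace $\tau_{\langle g\rangle}$ is defined and continuous, so each $\tau_{\langle g\rangle}$ induces a homomorphism $K_0(\Csr(G))\to\mathbb{C}$. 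Since Theorem~\ref{thm: Z_n*Z_m} already provides $[p_1]\in K_0(\Csr(G))$, Definition~\ref{def:deloc l2betti num formula} applies and $\beta^{(2)}_{1,\langle g\rangle}(G)=\tau_{\langle g\rangle}([p_1])$.

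For the degree-one values I would substitute the identity of Theorem~\ref{thm: Z_n*Z_m},
\[
[p_1]=[1]-[q_m]-[q_n],\qquad q_m=\tfrac{1}{m}\sum_{0\le i<m} s^i,\quad q_n=\tfrac{1}{n}\sum_{0\le j<n} t^j,
\]
where $q_m,q_n$ are honest projections in $\mathbb{C}[G]\subset\mathcal S$. Additivity of $\tau_{\langle g\rangle}$ on $K_0$, together with the fact that the pairing of a trace with the class of a projection is the trace of that projection, reduces the computation to $\beta^{(2)}_{1,\langle g\rangle}(G)=\tau_{\langle g\rangle}(1)-\tau_{\langle g\rangle}(q_m)-\tau_{\langle g\rangle}(q_n)$; then $\tau_{\langle g\rangle}(f)=\sum_{h\in\langle g\rangle}f(h)$ gives $\tau_{\langle g\rangle}(1)=1$ if $g=e$ and $0$ otherwise, $\tau_{\langle g\rangle}(q_m)=\tfrac{1}{m}\,|\langle g\rangle\cap\mathbb{Z}_m|$, and $\tau_{\langle g\rangle}(q_n)=\tfrac{1}{n}\,|\langle g\rangle\cap\mathbb{Z}_n|$. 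The remaining ingredient is the conjugacy theory of free products: in $A\ast B$ a nontrivial element of one factor is never $G$-conjugate to $e$ nor to any element of the other factor, and two elements of the same factor are $G$-conjugate iff they are conjugate inside that factor; since $\mathbb{Z}_m$ and $\mathbb{Z}_n$ are abelian this forces $|\langle g\rangle\cap\mathbb{Z}_m|=1$ when $g$ is conjugate to an element of $\mathbb{Z}_m$ (in particular when $g=e$) and $=0$ otherwise, and symmetrically for $\mathbb{Z}_n$. Because every torsion element of $G$ is conjugate into one of the two factors, the cases $g=e$, $g$ conjugate to a nontrivial element of $\mathbb{Z}_m$, $g$ conjugate to a nontrivial element of $\mathbb{Z}_n$, and $g$ of infinite order exhaust $G$ up to conjugacy, and plugging the corresponding values of $|\langle g\rangle\cap\mathbb{Z}_m|$ and $|\langle g\rangle\cap\mathbb{Z}_n|$ into the three-term formula produces exactly the asserted values.

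For $k\neq 1$ the point is that $p_k$ is trivial. In degree $0$, $p_0$ is the orthogonal projection of $\ell^2(G)$ onto its $G$-invariant vectors, which is $0$ since $G$ is infinite, so $\beta^{(2)}_{0,\langle g\rangle}(G)=0$. For $k\ge 2$, the proper cocompact $G$-action on the $1$-dimensional Bass--Serre tree with finite stabilisers, together with the vanishing of positive-degree cohomology of finite groups with coefficients in a $\mathbb{C}$-vector space, gives $H^k(G;\ell^2 G)=0$ (equivalently $\mathrm{cd}_{\mathbb{Q}}(G)=1$); in particular $H^k$ and $H^{k+1}$ are reduced, $\Delta_k$ has a spectral gap, $p_k$ lies in matrices over $\Csr(G)$, and $p_k=0$ as it is the projection onto harmonic $k$-cochains. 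Hence $\beta^{(2)}_{k,\langle g\rangle}(G)=\tau_{\langle g\rangle}(0)=0$ for all such $k$ and all $g$.

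The only delicate point is the second paragraph: one has to be precise about which elements of each finite cyclic factor actually occur in a given $G$-conjugacy class, and about the reduction of $\tau_{\langle g\rangle}([p_1])$ to the traces of the explicit projections $1,q_m,q_n\in\mathbb{C}[G]$ --- which is where the smooth subalgebra of Puschnigg and the invariance of the trace pairing on $K_0$ are used. Everything else is bookkeeping with Theorem~\ref{thm: Z_n*Z_m} and standard facts about virtually free groups.
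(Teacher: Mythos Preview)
Your proposal is correct and follows the same route as the paper: use hyperbolicity (via Puschnigg) to make delocalised traces well-defined on $K_0(\Csr(G))$, pair them with the explicit class $[p_1]=[1]-[q_m]-[q_n]$ from Theorem~\ref{thm: Z_n*Z_m}, and note that $p_k=0$ for $k\neq 1$. You spell out the trace computations and the conjugacy-class structure of free products that the paper's three-sentence proof leaves implicit, and for $k\neq 1$ you argue directly via the one-dimensional Bass--Serre tree rather than citing Lemma~\ref{lem:existence} (which reaches the same conclusion through vanishing of $\ell^2$-Betti numbers and faithfulness of $\tau$), but the underlying strategy is identical.
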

	
	
	
	In addition to establishing non-vanishing results, we also demonstrate several vanishing instances. Notably, we prove that if the $n$-th $\ell^2$-Betti number of a hyperbolic group $G$ vanishes, then the $n$-th delocalised $\ell^2$-Betti numbers $\beta^2_{n, \langle g \rangle} (G)$ vanishes for all $g\in G$ (see Proposition \ref{prop:l2 vanishing}).
	Moreover, we provide more evidences concerning an observation made by Lott on vanishing of delocalised $\ell^ 2$-Betti numbers in the case of torsion-free hyperbolic groups. Specifically, we establish that for such a group $G$ and under the presence of a spectral gap for $\Delta_n$, all $n$-th delocalised $\ell^2$-Betti numbers $\beta^2_{n, \langle g \rangle}(G)$ vanish for $g\neq e$ (see Proposition \ref{prop:torfreehyper}).
	
	\textbf{Added Notes:} After submission of this work, the authors together with B. Ren posted the preprint \cite{pooyarenwang2025}   obtaining related results. The latter establishes a more general framework to explicitly compute the K-theory class of higher Kazhdan projections for the class of virtually free groups and computes the corresponding delocalised $\ell^2$-Betti numbers. This class includes all free products of the kind we discussed in this article. The two methods are however very different. Here we directly analyse $\ker \Delta_n$  to describe the associated higher Kazhdan projection, while in \cite{pooyarenwang2025} we appeal to the combinatorial Euler characteristic of Emerson and Meyer \cite{EM} as well as the structure theory of virtually free groups. The K-theory of higher Kazhdan projections can be then expressed as an alternating sum of averaging projections associated to specific isotropy subgroups of the action on the associated Bass-Serre tree of such a group.
	
	This article contains four sections. After the introduction, in Section \ref{sec:preliminaries} we collect preliminaries on higher Kazhdan projections, delocalised $\ell^2$-Betti numbers and smooth subalgebras. 
	In Section \ref{sec: vanishing}, we prove our vanishing results. In Section \ref{sec: computations} we prove Theorem \ref{thm A} and compute its first delocalised $\ell^2$-Betti numbers as it is stated in Corollary \ref{cor D}. Moreover, we prove Theorem \ref{thm B}.
	
	\subsection*{Acknowledgments}
	H.W. is supported by the grants NSFC 12271165,23JC1401900 and in part by Science and Technology Commission of Shanghai Municipality (No. 22DZ2229014). S.P. was supported by Knut and Alice Wallenberg foundation through grant number 31001288. We would like to thank John Lott and Aaron Tikuisis for very useful comments and Rufus Willett for various conversations around the topic.
	
	\section{Preliminaries}
	\label{sec:preliminaries}
	\subsection{Higher Kazhdan projections} \label{subsec:higher Kazh proj}
	In this section we briefly recall the construction of higher Kazhdan projections from \cite{linowakpooya2020}. 
	Let $G$ be a group of type $F_{n+1}$ with a chosen model $X$ of $\mathrm B G$ with finite $n+1$-skeleton.
	Let $(\pi, \mathcal H)$ be a unitary representation of $G$. We consider the group cohomology of $G$ with coefficients in $\mathcal H$ computed via 
	\[
	\mathrm{C}^0(G, \mathcal H) 
	\stackrel{d_0} \longrightarrow \mathrm{C}^1(G, \mathcal H)
	\stackrel{d_1}  \longrightarrow \cdots 
	\stackrel{d_{i-2}}\longrightarrow
	\mathrm{C}^{i-1}(G, \mathcal H) 
	\stackrel{d_{i-1}} \longrightarrow  \mathrm{C}^i(G, \mathcal H)
	\stackrel{d_i}  \longrightarrow  \mathrm{C}^{i+1}(G, \mathcal H)
	\stackrel{d_{i+1}} \longrightarrow  \cdots.
	\]
	By the assumption that $G$ is of type $F_{n+1}$ we may consider the following identification when $i \leq n$
	\[
	\mathrm{C}^i(G, \mathcal H) = \left\{ f \colon X^{(i)} \ra \mathcal H\right\} \cong \mathcal H ^{\oplus k_i},
	\]
	with $k_i$ being the number of cells in $X^{(i)}$.
	The coboundaries $d_i$'s are identified with matrices in 
	$\mathrm {M}_{k_{i+1} \times k_i}(\mathbb{Z} G)$ over the integral group ring $\mathbb {Z}G$.
	
	The (combinatorial) Laplacian in degree $n$ is the operator
	\[
	\Delta_i = d_i^* d_i + d_{i-1} d^*_{i-1} \in \mathrm M_{k_{i}}(\mathbb {Z}G).
	\]
	
	The kernel of $\pi(\Delta_i)$ is the space of harmonic $i$-cochains for $\pi$, which is by the Hodge-de Rham isomorophism identified with the reduced cohomology ${\overline{\mathrm H}}^n(G, \mathcal H)$, that is
	\[
	\ker \pi(\Delta_i) \simeq {\overline{\mathrm H}}^i(G, \mathcal H). 
	\]
	Recall that in the setting where the cochain spaces in a cochain complex are equipped with a Banach space structure the corresponding reduced cohomology group are defined by the quotients 
	${\overline{\mathrm H}}^i(G, \mathcal H) :=\ker d_i ~/~ \overline{\mathrm{im} \, d_{i-1}}$, where $\overline{\mathrm{im}\,d_{i-1}}$ is the closure of the image of the codifferential $d_{i-1}$. The cohomology is reduced in degree $i$ if 
	${\overline{\mathrm H}}^i(G, \mathcal H) = \mathrm {H}^i(G, \mathcal H)$.
	
	\begin{definition} \cite{linowakpooya2020}
		Let $G$ be a group of type $F_{n+1}$ with a chosen model $X$ for $\mathrm B G$. Let $(\pi, \mathcal H)$ be a unitary representation of $G$. The projection $p_n \colon \mathcal H ^{\oplus n_k} \ra \ker \pi(\Delta_{n})$ is called a higher Kazhdan projection in degree $n$, where $n_k$ is the number of cells in the $n$-th skeleton of $X$.	
	\end{definition}
	
	Similar to the situation with property (T), higher Kazhdan projections exist in the presence of a spectral gap at $0$.
	
	\begin{lemma} 
		{\cite [Proposition 16.2] {badernowak2020}} 
		\label{lem:spectral gap condition}
		Let $(\pi, \mathcal H)$ be a unitary representation of $G$. The Laplacian $\pi(\Delta_n)$ has spectral gap at $0$ if and only if the cohomology is reduced in degree $n$ and $n+1$.
	\end{lemma}
	
	Assume that for the left regular representation $\lambda$, the operator $\lambda(\Delta_n)$ has spectral gap at $0$ so that $[p_n] \in \mathrm K_0 (\Csr(G))$. Let $\tau$ denote the canonical trace on $\Csr(G)$ and by abuse of notation its extension to the K-theory level. By \cite[Proposition 7]{linowakpooya2020}, the $n$-th $\ell^2$-Betti numbers of $G$ are obtained by the trace pairing  
	\begin{gather} \label{def:l2 betti number definition}
		\beta^{(2)}_n (G) = \tau ([p_n]).
	\end{gather}
	
	\textbf{Convention}. We will exclusively work with the left regular representation. Further, By abuse of notation, we identify $\lambda(\Delta_n)$ as $\Delta_n$.
	
	\subsection{Delocalised 
		\texorpdfstring{$\ell^2$}{l²}
		-Betti numbers}\label{subsec:deloc l2}
	John Lott introduced  delocalised $\ell^2$-Betti numbers as part of his delocalised $\ell^2$-invariants \cite{lott1999}. These are analogues of the 
	$\ell^2$-invariants of closed Riemannian manifolds. Before presenting the definition, we recall the notion of delocalised traces that will be one of its ingredients.
	
	\begin{definition}
		Let $G$ be a discrete group and $g\in G$. The delocalised trace  $\tau_{\langle g\rangle}$ on $\ell^1(G)$ is the bounded linear map with tracial property 
		\[
		\tau_{\langle g\rangle}: \ell^1(G)\rightarrow \mathbb C, \qquad 
		\tau_{\langle g\rangle}(f)= \sum_{h\in \langle g\rangle}f(h),
		\]
		where $\langle g\rangle$ denotes the conjugacy class of $g$.
	\end{definition}
    Precomposition with matrix traces makes it possible to consider delocalised traces as functionals on matrix algebras.
	
	The following analog is in complete analogy with Lott's definition for closed  manifolds. The difference is that we consider groups rather than manifolds and combinatorial Laplacian instead of Laplace-Baltrami operators. 
	
	
	\begin{definition}  \label{def: lott deloc betti numbers} {(cf.~ \cite[Definition 3]{lott1999})}
		Let $G$ be a discrete group. For $g\in G$, the $n$-th delocalised $\ell^2$-Betti number of $G$ with respect to the conjugacy class $\langle g\rangle$ is defined by
		\begin{gather}\label{analytic def}
		\beta ^{(2)}_{n, \langle g\rangle}(G) = \lim_{t \to \infty} \tau_{\langle g\rangle}(e^{-t \Delta_n}),
		\end{gather}
		when the limit exists.
	\end{definition}
	We refer to this definition as the analytic definition of delocalised $\ell^2$-Betti numbers.
 
	
	In analogy with Lott's observation for manifolds in \cite[Proposition 18]{lott1999}, self-adjointness of the operator $e^{-t\Delta_n}$ together with $\tau_{\langle g\rangle}(a^*) = \overline{{\tau_{\langle g^{-1}\rangle}{(a)}}}$ imply that 
	\[
	\beta ^{(2)}_{n, \langle g^{-1}\rangle}(G)=\overline{\beta ^{(2)}_{n, \langle g\rangle}(G)}.
	\]
	
		In some situations our delocalised $\ell^2$-Betti numbers arise in Lott's setting. Recall that Lott is considering closed manifolds with a normal $G$-cover to which he associated his invariants. When the classifying space $\mathrm{B}G$ of a group $G$ has a homotopy type of a closed manifold we can consider its universal cover and it turns out that the associated delocalised $\ell^2$-Betti numbers agree.
		
		
	
	\begin{remark}
		Lott defined his invariants with respect to conjugacy class of non-trivial elements, which makes sense in view of the choice of name delocalised. 
		For convenience we include the case of the trivial element because all of our statements work uniformly. 
		
	\end{remark}
	
	
	\subsection{Smooth subalgebras and K-theory pairing with delocalised traces} \label{subsec:smooth sal}
	
	For the purpose of this article a smooth subalgebra $\mathcal S$ of a $\Cs$-algebra is a dense Banach subalgebra which is closed under holomorphic functional calculus. 
	An important feature of smooth subalgebras is that they have the same K-theory as the $\Cs$-algebra. See \cite[Appendix A]{bost}.
	For a given group $G$, we are interested in smooth subalgebras of its reduced group $\Cs$-algebra $\Csr(G)$. Let $g\in G$. 
	If delocalised traces extend continuously to a smooth subalgebra of $\Csr(G)$, this gives rise to a K-theory pairing by considering
	
	\[
	\tau_{\langle g \rangle}: \mathrm{K}_0(\Csr(G))\cong \mathrm{K}_0(\mathcal S)\ra \mathbb C.
	\]
	
	The following result is a consequence of work of Puschnigg on traces on unconditional completions of group rings presented in Section 5 of \cite{puschnigg2010}. In particular it follows from the initial part of the proof of Theorem 5.2 and Proposition 5.5 in there.
	
	\begin{theorem} 
		\cite[Section 5]{puschnigg2010}
		Let $G$ be a hyperbolic group. There exists a smooth subalgebra of $\Csr(G)$ to which delocalised traces extend.
	\end{theorem}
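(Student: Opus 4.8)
The plan is to extract the statement from the circle of results in \cite[Section 5]{puschnigg2010}, where essentially all of the work is already done; what remains is a short bookkeeping observation. First I would recall the relevant construction. For a word-hyperbolic group $G$ with a fixed word-length function $\ell$, Puschnigg produces an \emph{unconditional completion} $\mathcal S$ of the group ring $\bC G$ --- a Banach algebra completion whose norm depends only on the absolute values of the coefficients, of weighted-$\ell^1$ type, with norm $\|f\|_{\mathcal S}=\sum_{g\in G}\omega(g)\,|f(g)|$ for a submultiplicative weight $\omega\geq 1$ built from $\ell$, the precise $\omega$ being tuned so as to make the next point hold. The two properties I would quote from \cite[Theorem 5.2]{puschnigg2010} (more precisely, from the initial part of its proof) are: (i) $\mathcal S$ is densely and continuously contained in $\Csr(G)$ and is closed under holomorphic functional calculus, so that $\mathcal S$ is a smooth subalgebra in the present sense and the inclusion induces an isomorphism $\mathrm K_0(\mathcal S)\cong\mathrm K_0(\Csr(G))$; and (ii) since $\omega\geq 1$, the identity of $\bC G$ extends to a contractive inclusion $\mathcal S\hookrightarrow\ell^1(G)$.

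Granting this, the extension of the delocalised trace is elementary. On $\ell^1(G)$ the functional $\tau_{\langle g\rangle}(f)=\sum_{h\in\langle g\rangle}f(h)$ is nothing but the pairing of $f$ with the bounded function $\mathbf{1}_{\langle g\rangle}\in\ell^\infty(G)$, hence bounded with norm at most $1$; composing it with the contractive inclusion of (ii) yields a bounded functional on $\mathcal S$, still denoted $\tau_{\langle g\rangle}$, with $|\tau_{\langle g\rangle}(f)|\leq\|f\|_{\mathcal S}$ --- this is precisely the continuity that is isolated in \cite[Proposition 5.5]{puschnigg2010}. I would then check that the tracial property survives the passage to $\mathcal S$: it holds on $\bC G$ by reindexing the sum over $\langle g\rangle$, using that $xy$ and $yx$ are always conjugate, and it extends to $\mathcal S$ by density of $\bC G$ together with continuity of multiplication on $\mathcal S$ and of $\tau_{\langle g\rangle}$. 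Finally, precomposing with the matrix trace $\Tr$ promotes $\tau_{\langle g\rangle}$ to tracial functionals on $\mathrm M_k(\mathcal S)$ for all $k$, which gives the K-theory pairing $\tau_{\langle g\rangle}\colon\mathrm K_0(\Csr(G))\cong\mathrm K_0(\mathcal S)\to\bC$ used in Definition \ref{def C}.

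The one genuinely hard ingredient is the black box (i): that a weighted completion of $\bC G$ of the above type is holomorphically closed in $\Csr(G)$. The difficulty is a balancing act --- plain $\ell^1(G)$ is too large, failing to be holomorphically closed as soon as $G$ is non-amenable, whereas a weight that grows too fast yields an algebra too small to be stable under holomorphic functional calculus --- and the point of Puschnigg's argument is that for hyperbolic $G$ one can hit the right balance, using the thinness of geodesic triangles and the resulting estimates on the $\Csr(G)$-norms of convolution operators (in the spirit of property RD, but with the weight absorbing the exponential growth of spheres). I would not attempt to reprove this; given it, the only thing that needs to be added is the remark above that a delocalised trace, being already continuous on $\ell^1(G)$, stays continuous on any completion of $\bC G$ that embeds contractively into $\ell^1(G)$.
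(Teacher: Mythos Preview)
The paper does not supply its own proof of this theorem; it simply records it as a consequence of \cite[Section 5]{puschnigg2010}, pointing specifically to the initial part of the proof of Theorem 5.2 and to Proposition 5.5 there. Your proposal is exactly an expanded version of that citation: you isolate the two ingredients the paper names (holomorphic closedness of the unconditional completion from Theorem 5.2, and continuity of conjugacy-class traces from Proposition 5.5) and spell out the trivial gluing step. So the approach matches, and your exposition is correct in substance. One small caveat: you present Puschnigg's algebra as literally a weighted-$\ell^1$ completion with a single submultiplicative weight $\omega$; Puschnigg's actual construction is phrased more abstractly in terms of unconditional norms and involves an iterated/limit construction rather than a single explicit weight, so you may want to soften that description or simply say ``an unconditional completion $\mathcal S$ with $\bC G\subseteq\mathcal S\subseteq\ell^1(G)$'' without committing to the precise form of the norm. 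The rest of your bookkeeping (boundedness via pairing with $\mathbf 1_{\langle g\rangle}\in\ell^\infty(G)$, tracial property by density) is fine.
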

    \begin{definition}
    	Let $G$ be a group. A smooth subalgebra of $\Csr(G)$ which contain the complex group ring $\mathbb CG$ and to which all delocalised traces extend is called a Puschnigg subalgebra.
    \end{definition}
    
 Recall that for a group $G$ of type $F_{n+1}$, its Laplacian $\Delta_n$ belongs to some matrices over $ \mathbb CG \subseteq \mathcal S$.
%
%
The next proposition demonstrates that the family of heat operators converges already in the Puschnigg subalgebras. The convergence is crucial for our later applications. Our proof is analogous to an argument given in the proof of \cite[Lemma 4]{lott1999}. 


\begin{remark}
    Hyperbolic groups are of type $F_{\infty}$. Therefore we will drop the assumption of type $F_{n+1}$ when stating our results concerning $p_n$, for $n\in \mathbb N$.
\end{remark}

        \begin{proposition}
        	Let $G$ be a hyperbolic group. If $\Delta_n$ has a spectral gap, then  
         $p_n\in \mathcal S$ and $$\lim_{t\to \infty} e^{-t \Delta_n}=p_n$$ 
        	in $\mathcal S$ for any Puschnigg subalgebra $\mathcal S \subseteq \Csr(G)$.
		\end{proposition}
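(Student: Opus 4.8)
The statement has two parts: first that $p_n$ lies in the Puschnigg subalgebra $\mathcal S$, and second that the heat semigroup $e^{-t\Delta_n}$ converges to $p_n$ in the (Banach) norm of $\mathcal S$ as $t\to\infty$. The natural strategy is to realise $p_n$ via holomorphic functional calculus applied to $\Delta_n$ and then to control the convergence through a contour-integral (Cauchy integral) representation, exactly mimicking the argument for $\ell^1$-type completions in \cite[Lemma 4]{lott1999}. Since $\mathcal S$ is a smooth subalgebra it is closed under holomorphic functional calculus, and $\Delta_n\in\mathrm M_{k_n}(\mathbb CG)\subseteq \mathrm M_{k_n}(\mathcal S)$, so we are entitled to apply such calculus inside (matrices over) $\mathcal S$.

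\textbf{Step 1: spectral gap gives a clopen decomposition of the spectrum.} By hypothesis $\Delta_n$ has a spectral gap at $0$, i.e. $\sigma(\Delta_n)\subseteq\{0\}\cup[\varepsilon,\infty)$ for some $\varepsilon>0$, where the spectrum is computed in the $\Cs$-algebra $\mathrm M_{k_n}(\Csr(G))$ — and, since $\mathcal S$ is spectral-invariant, the same spectrum is obtained in $\mathrm M_{k_n}(\mathcal S)$. Thus $\{0\}$ is an isolated point of $\sigma(\Delta_n)$, and the function equal to $1$ near $0$ and $0$ near $[\varepsilon,\infty)$ is holomorphic on a neighbourhood of $\sigma(\Delta_n)$. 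Applying holomorphic functional calculus to $\Delta_n$ with this function produces a projection, which one identifies with $p_n$ (it is the spectral projection onto $\ker\Delta_n$, and $p_n$ is by definition that projection onto harmonic cochains); this simultaneously shows $p_n\in\mathrm M_{k_n}(\mathcal S)$ and hence $p_n\in\mathcal S$ after the usual identification. Concretely one can write
\[
p_n = \frac{1}{2\pi i}\oint_{\gamma} (z-\Delta_n)^{-1}\,dz,
\]
where $\gamma$ is a small positively-oriented circle around $0$ contained in the resolvent set, the integral converging in $\mathrm M_{k_n}(\mathcal S)$ because $z\mapsto (z-\Delta_n)^{-1}$ is a continuous (indeed holomorphic) $\mathcal S$-valued function on $\gamma$.

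\textbf{Step 2: a functional-calculus formula for $e^{-t\Delta_n}-p_n$ and its decay.} The function $f_t(z)=e^{-tz}$ is entire, so $e^{-t\Delta_n}=f_t(\Delta_n)\in\mathrm M_{k_n}(\mathcal S)$ for each $t$. On the spectrum, $f_t\to \mathbf 1_{\{0\}}$ uniformly on $\sigma(\Delta_n)$ as $t\to\infty$: on $[\varepsilon,\infty)$ we have $|e^{-tz}|\le e^{-t\varepsilon}\to 0$, and at $0$ it is identically $1$. The point, however, is to upgrade this spectral estimate to convergence in the Banach norm $\|\cdot\|_{\mathcal S}$. Write, using a contour $\Gamma=\gamma\cup\gamma'$ where $\gamma$ encircles $0$ and $\gamma'$ runs around $[\varepsilon,\infty)$ (e.g. a vertical line $\mathrm{Re}\,z=\varepsilon/2$ together with a large arc, or a keyhole contour),
\[
e^{-t\Delta_n}-p_n = \frac{1}{2\pi i}\oint_{\gamma'} e^{-tz}\,(z-\Delta_n)^{-1}\,dz,
\]
since the $\gamma$-part of $e^{-t\Delta_n}$ cancels $p_n$. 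Then
\[
\bigl\| e^{-t\Delta_n}-p_n\bigr\|_{\mathcal S} \le \frac{1}{2\pi}\int_{\gamma'} |e^{-tz}|\;\bigl\|(z-\Delta_n)^{-1}\bigr\|_{\mathcal S}\;|dz|.
\]
Choosing $\gamma'$ so that $\mathrm{Re}\,z\ge \varepsilon/2$ along it, one gets $|e^{-tz}|\le e^{-t\varepsilon/2}$, and it remains to check that $\int_{\gamma'}\|(z-\Delta_n)^{-1}\|_{\mathcal S}\,|dz|$ is finite (or at worst grows sub-exponentially in the relevant parameter), after which the bound tends to $0$ as $t\to\infty$. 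This gives convergence in $\mathcal S$, which is stronger than — and implies — the strong convergence in $\Csr(G)$.

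\textbf{The main obstacle.} The genuinely delicate point is the second step: controlling the $\mathcal S$-norm of the resolvent $(z-\Delta_n)^{-1}$ along an unbounded contour $\gamma'$ surrounding $[\varepsilon,\infty)$, since $\|\Delta_n\|<\infty$ already bounds the spectrum and one can in fact take $\gamma'$ to be a \emph{bounded} contour (a circle of radius $\|\Delta_n\|+1$ with a small notch excluding $0$), on which $z\mapsto(z-\Delta_n)^{-1}$ is continuous and $\mathcal S$-valued, hence has finite $\mathcal S$-norm integral automatically. With that observation the estimate is immediate: $\|e^{-t\Delta_n}-p_n\|_{\mathcal S}\le C e^{-t\varepsilon/2}\to 0$. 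So the real content one must be careful about is (a) justifying that holomorphic functional calculus for $\Delta_n$ really takes values in $\mathcal S$ rather than merely in $\Csr(G)$ — which is exactly the defining property of a smooth (spectral-invariant) subalgebra, and in the matricial setting follows since $\mathrm M_{k_n}(\mathcal S)$ is again smooth in $\mathrm M_{k_n}(\Csr(G))$ — and (b) identifying the functional-calculus projection with the cohomologically defined $p_n$, which is precisely the Hodge--de Rham picture recalled before Lemma \ref{lem:spectral gap condition}. Both (a) and (b) are standard, so the proof is short; I would present it essentially as the two displayed contour-integral identities above together with the exponential estimate, citing \cite[Lemma 4]{lott1999} for the analogy and Puschnigg's spectral invariance for the $\mathcal S$-valued functional calculus.
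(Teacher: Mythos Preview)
Your argument is correct and reaches the same conclusion, but the route differs from the paper's. You estimate $e^{-t\Delta_n}-p_n$ directly via the Cauchy integral
\[
e^{-t\Delta_n}-p_n=\frac{1}{2\pi i}\oint_{\gamma'}e^{-tz}(z-\Delta_n)^{-1}\,dz
\]
over a bounded contour $\gamma'$ encircling $[\varepsilon,\|\Delta_n\|]$ inside $\{\mathrm{Re}\,z\ge\varepsilon/2\}$, and then bound the $\mathcal S$-norm by $Ce^{-t\varepsilon/2}$ using continuity of the resolvent on the compact contour. The paper instead writes $p_n-e^{-t\Delta_n}$ as $e^{-t\tilde\Delta_n}$ with $\tilde\Delta_n$ the compression of $\Delta_n$ to $\mathrm{im}(1-p_n)$, and invokes a general semigroup result (Davies, \emph{One-parameter semigroups}, Theorem~1.22) identifying the growth bound $r=\lim_{t\to\infty}t^{-1}\ln\|e^{-t\tilde\Delta_n}\|_{\mathcal S}$ with the logarithm of the spectral radius; since the latter equals $-\lambda_0<0$ by the spectral gap and spectral invariance, exponential decay follows. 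Your contour argument is more self-contained---it avoids the external semigroup reference and makes the constant $C=\frac{1}{2\pi}\int_{\gamma'}\|(z-\Delta_n)^{-1}\|_{\mathcal S}\,|dz|$ explicit---while the paper's approach has the virtue of packaging the passage from spectral radius to Banach-norm decay into a single citable fact. Both rely on exactly the same structural inputs: $\Delta_n\in\mathrm M_{k_n}(\mathbb CG)\subseteq\mathrm M_{k_n}(\mathcal S)$, spectral invariance of $\mathcal S$ (hence of $\mathrm M_{k_n}(\mathcal S)$), and boundedness of $\Delta_n$.

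One cosmetic point: when you write ``hence $p_n\in\mathcal S$ after the usual identification'' you should keep in mind that $p_n$ genuinely lives in $\mathrm M_{k_n}(\mathcal S)$, not in $\mathcal S$ itself; the paper's statement has the same abuse of notation, so this is harmless.
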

	\begin{proof}
		The membership of the heat operator $e^{-t \Delta_n}$ in any Banach algebraic closure of the group ring $\mathbb CG$, and in particular to $\mathcal S$ is straightforward to verify. Furthermore, an easy application of holomorphic functional calculus implies that $p_n$ belongs to $\mathcal S$. 
		Due to the fact that $\mathcal S$ is holomorphically closed, the following computation can be done in $\mathcal S$.		
		Consider the decomposition
		\begin{align*}
			p_n - e^{-t \Delta_n} &= p_n (p_n - e^{-t \Delta_n}) + (id - p_n)(p_n - e^{-t \Delta_n})\\
			&= (id - p_n) e^{-t \Delta_n} (id - p_n)\\
			&= e^{(id - p_n)-t \Delta_n (id - p_n)}\\
			&:=  e^{-t \tilde{\Delta_n}}.
		\end{align*}
		We show that $\|e^{-t \tilde{\Delta_n}} \|$ goes to zero.
		Consider the family of one parameter semi groups $\{e^{-t \tilde{\Delta_n}}\}_{t>0}$ in the Banach algebra $\mathcal S$. By \cite [Theorem 1.22]{davies1980}, the limit below exists
		\[
			r = \lim_{t \to \infty} t^{-1} \ln \|e^{-t \tilde{\Delta_n}} \|,
		\]
		and we have that for all $t> 0$ the spectral radius of $e^{-t \tilde{\Delta_n}}$ is $e^{rt}$. Let $\lambda_0 > 0$ be the inf of the spectrum of $\tilde{\Delta_n}$. Then the spectral radius of $e^{-t \tilde{\Delta_n}}$ is $e ^{-t \lambda_0}$.
		Since $r < 0$, then $r/2 > r$, implying that for $t>t_0$
		\[
		t^{-1} \ln \|e^{-t \tilde{\Delta_n}} \| \leq r/2.
		\]
		When $t>1$ we obtain that for $C>0$
		\[
		\|e^{-t \tilde{\Delta_n}} \| \leq C e ^{-t \lambda_0/ 2}.
		\]
		Letting $t$ go to infinity, we obtain the desired convergence in $\mathcal S$.
	\end{proof}

	\begin{corollary}
		Let $G$ be a hyperbolic group. If $\Delta_n$ has spectral gap, then 
		the K-class $[p_n]$ belongs to $\mathrm{K}_0(\Csr(G))$.
	\end{corollary}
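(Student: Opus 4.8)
The plan is to deduce this directly from the preceding proposition together with the standard fact that a smooth subalgebra has the same $\mathrm K$-theory as the ambient $\Cs$-algebra. First, by the theorem of Puschnigg recalled above, a hyperbolic group $G$ admits a Puschnigg subalgebra $\mathcal S \subseteq \Csr(G)$; fix one such $\mathcal S$. The proposition just proved shows that, under the spectral gap hypothesis on $\Delta_n$, the higher Kazhdan projection $p_n$ lies in a matrix algebra over $\mathcal S$. Since $\mathcal S$ is a smooth subalgebra of $\Csr(G)$, i.e.\ dense and closed under holomorphic functional calculus, the inclusion $\mathcal S \hookrightarrow \Csr(G)$ induces an isomorphism $\mathrm K_0(\mathcal S) \cong \mathrm K_0(\Csr(G))$. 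As $p_n$ is a projection in a matrix algebra over $\mathcal S \subseteq \Csr(G)$, it defines a class $[p_n] \in \mathrm K_0(\Csr(G))$, which is the assertion.

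One can also argue without explicitly invoking $\mathcal S$, at the level of $\Cs$-algebras only, and I would include this remark for transparency. The Laplacian $\Delta_n$ is a self-adjoint element of $\mathrm M_{k_n}(\mathbb CG) \subseteq \mathrm M_{k_n}(\Csr(G))$, and the spectral gap at $0$ means precisely that its spectrum is contained in $\{0\} \cup [\lambda_0, \infty)$ for some $\lambda_0 > 0$. Hence $\{0\}$ is an isolated point of $\mathrm{spec}(\Delta_n)$, the indicator function $\chi_{\{0\}}$ agrees on $\mathrm{spec}(\Delta_n)$ with a function holomorphic on a neighbourhood of the spectrum, and $p_n = \chi_{\{0\}}(\Delta_n)$ is produced by holomorphic functional calculus inside the $\Cs$-algebra $\mathrm M_{k_n}(\Csr(G))$. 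Therefore $p_n \in \mathrm M_{k_n}(\Csr(G))$ is a genuine projection and $[p_n] \in \mathrm K_0(\Csr(G))$.

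I do not expect any real obstacle here: all the analytic substance, namely the membership $p_n \in \mathcal S$ together with the convergence of the heat semigroup, has already been established in the proposition, and the corollary is a formal consequence of that statement combined with either holomorphic functional calculus in $\mathrm M_{k_n}(\Csr(G))$ or the invariance of $\mathrm K_0$ under passage to a dense holomorphically closed subalgebra. The only point worth flagging is that the spectral gap hypothesis is genuinely needed: without it one only knows a priori that $p_n$ lies in matrices over $\mathcal B(\mathcal H)$, and there is no reason for it to belong to $\Csr(G)$.
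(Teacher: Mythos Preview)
Your primary argument is essentially identical to the paper's: hyperbolicity gives a Puschnigg subalgebra $\mathcal S$, the preceding proposition places $p_n$ in matrices over $\mathcal S$, and smoothness yields $\mathrm K_0(\mathcal S)\cong \mathrm K_0(\Csr(G))$. Your second, self-contained argument via holomorphic functional calculus directly in $\mathrm M_{k_n}(\Csr(G))$ is a correct and worthwhile addition that the paper does not spell out; it shows that for the bare statement of the corollary one does not actually need $\mathcal S$ at all, the Puschnigg subalgebra only becoming essential later when one wants to pair with delocalised traces.
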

	
	\begin{proof}
		By hyperbolicity, the reduced group $\Csr(G)$ contains a Puschnigg subalgebra $\mathcal S$. In the presence of a spectral gap $p_n$ belongs to some matrix over $\mathcal S$. Consequently, the K-class $[p_n]$ belongs to $\mathrm K_0(\mathcal S) = \mathrm K_0(\Csr(G))$.		
	\end{proof}
	Assume that for a given group $G$ there is a Puschnigg subalgebra $\mathcal S \subseteq \Csr(G)$. The advantage of having $\lim_{t\to \infty} e^{-t \Delta_n}=p_n$ in $\mathrm M_{k_n}(\mathcal S)$ is that we can do K-theory pairings with delocalised traces.  
    In such a situation we may rewrite (\ref{analytic def}) from Definition \ref{def: lott deloc betti numbers} to obtain an analogous formula to (\ref{def:l2 betti number definition}) for $\ell^2$-Betti numbers.

		\begin{definition} \label{def:deloc l2betti num formula}
			Let $G$ be a group of type $F_{n+1}$ and $g\in G$. Assume that there is a smooth subalgebra $\mathcal S \subseteq \Csr (G)$ to which the delocalised trace $\tau_{\langle g \rangle}$ extends. Assume further that the K-class of $p_n$ lies in $\mathrm K_0(\Csr (G))$.  We define the $n$-th delocalised $\ell^2$-Betti number of $G$ via the following pairing
			\begin{gather} \label{K-theoretic def}
				\beta ^{(2)}_{n, \langle g\rangle}(G) = \tau_{\langle g\rangle} ([p_n]).
			\end{gather}
		\end{definition}
  We refer to the above definition as the K-theoretic definition of delocalised $\ell^2$-Betti numbers. (See (\ref{analytic def}).)
		
		

	

	\section{Vanishing results for delocalised \texorpdfstring {$\ell^2$-} -Betti numbers} \label{sec: vanishing}
    In this section, we present our results concerning the vanishing of delocalised $\ell^2$-Betti numbers.  
    These results encompass connections to property (T) and $\ell^2$-Betti numbers. Furthermore, we provide additional evidence regarding a remark that Lott made concerning the vanishing of the invariant for torsion-free groups. 
    \begin{remark}
    The vanishing results in Proposition \ref{prop:l2 vanishing}, Corollary \ref{cor:property T}, and Proposition \ref{prop:vir.nilpotent} are obtained by appealing to our K-theoretic definition of  delocalised $\ell^2$-Betti numbers (Definition \ref{def:deloc l2betti num formula}). While in Proposition \ref{prop:l2 vanishing} and Corollary \ref{cor:property T} in the presence of a spectral gap for a relevant degree of Laplacian our K-theoretic definition and the analytic definition of delocalised $\ell^2$-Betti numbers agree, the situation with Proposition \ref{prop:vir.nilpotent} is different. For amenable groups a spectral gap is not expected, hence such groups do not fit in the setup of analytically defined delocalised $\ell^2$-Betti numbers. The key ingredient to the above mentioned results is that $p_n$ is zero.
    
\end{remark}
    
	\begin{proposition} \label{prop:l2 vanishing}
		Let $G$ be a hyperbolic group. If $\beta_n^ {(2)} (G)$ vanishes, then $\beta_{n, \langle g \rangle}^ {(2)}(G)$ vanishes for all $g\in G$.
	\end{proposition}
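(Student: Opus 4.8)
The plan is to deduce the statement from the K-theoretic definition of delocalised $\ell^2$-Betti numbers together with the fact that, for a hyperbolic group, the vanishing of the ordinary $n$-th $\ell^2$-Betti number forces the higher Kazhdan projection $p_n$ to vanish as a class (indeed as an operator) in the relevant completion. First I would invoke the hypothesis that $G$ is hyperbolic, hence of type $F_\infty$, so that $p_n$ is defined and, by the Corollary proved above, whenever $\Delta_n$ has a spectral gap we have $[p_n]\in \mathrm{K}_0(\Csr(G))$ and a Puschnigg subalgebra $\mathcal S\subseteq \Csr(G)$ exists to which every delocalised trace $\tau_{\langle g\rangle}$ extends. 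Thus Definition \ref{def:deloc l2betti num formula} applies and $\beta^{(2)}_{n,\langle g\rangle}(G)=\tau_{\langle g\rangle}([p_n])$ for every $g\in G$.

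The core observation is that $p_n$ is the projection onto the space of harmonic $n$-cochains, which via the Hodge--de Rham isomorphism is identified with $\overline{\mathrm H}^n(G,\ell^2 G)$, and the von Neumann dimension of this space is precisely $\beta^{(2)}_n(G)$. Since $p_n$ is a projection in a finite von Neumann algebra (matrices over $\mathcal N(G)$) and the canonical trace $\tau$ is faithful, the equality $\tau([p_n])=\beta^{(2)}_n(G)=0$ from (\ref{def:l2 betti number definition}) forces $p_n=0$. I would spell this out: $\tau$ is a faithful trace on $\mathrm{M}_{k_n}(\mathcal N(G))$, a positive element of trace zero is zero, and $p_n\ge 0$; hence $p_n=0$.

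Once $p_n=0$ as an operator, its K-class is $[p_n]=[0]=0$ in $\mathrm{K}_0(\mathcal S)\cong \mathrm{K}_0(\Csr(G))$, and therefore $\tau_{\langle g\rangle}([p_n])=\tau_{\langle g\rangle}(0)=0$ for every $g\in G$. Applying Definition \ref{def:deloc l2betti num formula} (or equivalently the analytic definition, since by the Proposition above $\lim_{t\to\infty}e^{-t\Delta_n}=p_n=0$ in $\mathcal S$ whenever there is a spectral gap) yields $\beta^{(2)}_{n,\langle g\rangle}(G)=0$ for all $g\in G$, as claimed. One should also record that if $\Delta_n$ has no spectral gap then there is nothing in the K-theoretic framework to prove — but in fact $\beta^{(2)}_n(G)=0$ together with the behaviour of the heat semigroup still gives $\lim_{t\to\infty}e^{-t\Delta_n}=0$ strongly and hence the analytic delocalised numbers vanish as well; I would add a sentence to this effect, or simply restrict the statement implicitly to the spectral-gap case that the K-theoretic definition requires.

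The only genuinely delicate point is the passage from $\tau([p_n])=0$ to $p_n=0$: this relies on $p_n$ living in a finite von Neumann algebra with a faithful normal trace, which is exactly the setting of $\mathrm{M}_{k_n}(\mathcal N(G))$ with the amplified canonical trace, so the argument is clean. Everything else is bookkeeping: verifying that the hypotheses of Definition \ref{def:deloc l2betti num formula} are met (hyperbolicity supplies the Puschnigg subalgebra and the extension of $\tau_{\langle g\rangle}$, the spectral gap supplies $[p_n]\in\mathrm{K}_0(\Csr(G))$) and then using additivity/normalisation of the trace pairing on the zero class. I do not anticipate any real obstacle beyond making sure the faithfulness argument is stated for the matrix amplification rather than for $\mathcal N(G)$ itself.
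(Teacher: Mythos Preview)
Your proposal is correct and follows essentially the same route as the paper: from $\beta_n^{(2)}(G)=\tau([p_n])=0$ conclude via faithfulness of the canonical trace that $p_n=0$, whence every pairing $\tau_{\langle g\rangle}([p_n])$ vanishes. The paper's proof is terser (it simply asserts that $p_n$ vanishes and hence all pairings do), while you have spelled out the faithfulness step and added commentary on the spectral-gap hypothesis, but the underlying argument is the same.
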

	
	\begin{proof}
		If the $n$-th $\ell^2$-Betti number of $G$ vanishes, then by (\ref{def:l2 betti number definition}), $p_n$ vanishes. This leads to the vanishing of all pairings of $[p_n]$ with delocalised traces. Therefore, all $\beta_{n, \langle g \rangle}^ {(2)}(G)$ vanish.
	\end{proof}

	
	\begin{corollary}\label{cor:property T}
		Let $G$ be a hyperbolic property (T) group. Then $\beta^ {(2)}_{1, \langle{g}\rangle} (G)$ vanishes for all $g\in G$. 
	\end{corollary}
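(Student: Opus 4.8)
The plan is to deduce this immediately from Proposition \ref{prop:l2 vanishing} once we know that the relevant $\ell^2$-Betti number vanishes and that the spectral gap hypothesis needed there is in place. So the proof reduces to two classical facts about a hyperbolic property (T) group $G$: first, that $\beta_1^{(2)}(G) = 0$, and second, that the degree-$1$ Laplacian $\Delta_1$ (for the left regular representation) has a spectral gap at $0$, so that $[p_1]$ genuinely lives in $\mathrm{K}_0(\Csr(G))$ and Proposition \ref{prop:l2 vanishing} applies.

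First I would recall that property (T) is equivalent to the vanishing of reduced cohomology $\overline{\mathrm{H}}^1(G,\pi) = 0$ for every unitary representation $\pi$; in particular $\overline{\mathrm{H}}^1(G,\lambda) = 0$ for the left regular representation. By the Hodge--de Rham identification $\ker\lambda(\Delta_1) \simeq \overline{\mathrm{H}}^1(G,\lambda)$ stated in Section \ref{subsec:higher Kazh proj}, this means the space of harmonic $1$-cochains is $0$, hence $p_1 = 0$ and, via \eqref{def:l2 betti number definition}, $\beta_1^{(2)}(G) = \tau([p_1]) = 0$. Next I would check the spectral gap: by Lemma \ref{lem:spectral gap condition}, $\lambda(\Delta_1)$ has a spectral gap at $0$ iff cohomology with coefficients in $\lambda$ is reduced in degrees $1$ and $2$. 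Degree $1$ is reduced precisely by property (T). For degree $2$ one can either invoke property (T) again in the strengthened form (reduced cohomology vanishes in all positive degrees for property (T) groups is not automatic, so this needs care) or instead argue more directly: since $p_1 = 0$, we do not even need Lemma \ref{lem:spectral gap condition} in full — however, to cite Proposition \ref{prop:l2 vanishing} cleanly we do want $[p_1]\in\mathrm{K}_0(\Csr(G))$, which by the Corollary following the heat-kernel Proposition holds as soon as $\Delta_1$ has a spectral gap. The cleanest route is: property (T) gives reducedness in degree $1$; and the target statement only concerns $\beta_{1,\langle g\rangle}^{(2)}$, so if $\Delta_1$ has spectral gap then everything goes through, and the spectral gap in turn follows once degree $2$ is also reduced — a point I would justify by the standard fact that property (T) groups have a spectral gap for $\Delta_n$ in low degrees, or simply by noting $p_1=0$ forces the $1$-part of the spectrum to be bounded away from $0$.

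Concretely, the proof I would write is short: \emph{By property (T), $\overline{\mathrm{H}}^1(G,\lambda)=0$, so by the Hodge--de Rham isomorphism $\ker\lambda(\Delta_1)=0$ and hence the higher Kazhdan projection $p_1$ equals $0$. In particular $\Delta_1$ has a spectral gap at $0$, so by the Corollary to the heat-kernel Proposition $[p_1]=[0]\in\mathrm{K}_0(\Csr(G))$, and by \eqref{def:l2 betti number definition} $\beta_1^{(2)}(G)=\tau([p_1])=0$. Since $G$ is hyperbolic, Proposition \ref{prop:l2 vanishing} applies and yields $\beta_{1,\langle g\rangle}^{(2)}(G)=0$ for all $g\in G$.} Alternatively one bypasses Proposition \ref{prop:l2 vanishing} and argues directly that all pairings of $[p_1]=[0]$ with delocalised traces vanish, using that a Puschnigg subalgebra exists for the hyperbolic group $G$.

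The main obstacle is not a deep one but a bookkeeping point: making sure the spectral-gap hypothesis under which $[p_1]$ is defined as a $\mathrm{K}_0(\Csr(G))$-class is actually satisfied, i.e. that $\lambda(\Delta_1)$ has a spectral gap at $0$. For a property (T) group the degree-$1$ reducedness is immediate, and once $p_1=0$ is established the bottom of the spectrum of $\Delta_1$ on the orthogonal complement of the (trivial) kernel is automatically positive because $G$ is finitely presented (type $F_2$) and the relevant operator is a bounded operator whose kernel is trivial — but one should phrase this carefully, since triviality of the kernel of a bounded operator does not by itself give a spectral gap. The safe formulation is to invoke property (T) to get reducedness in degree $1$ and to note that for a hyperbolic (hence $F_\infty$) property (T) group the combinatorial Laplacian $\Delta_1$ does have a spectral gap — this is exactly the situation treated in \cite{linowakpooya2020} for $p_1$ — so that the cited Corollary delivers $[p_1]\in\mathrm{K}_0(\Csr(G))$ and the argument closes.
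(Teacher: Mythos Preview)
Your approach is essentially the same as the paper's: property~(T) forces the first $\ell^2$-cohomology to vanish, hence $\beta_1^{(2)}(G)=0$, and then Proposition~\ref{prop:l2 vanishing} finishes. The paper's proof is two lines---it cites Delorme for the vanishing of $\mathrm{H}^1(G,\ell^2(G))$ under property~(T), deduces $\beta_1^{(2)}(G)=0$, and invokes Proposition~\ref{prop:l2 vanishing}.

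Your spectral-gap discussion is unnecessary and contains a slip you yourself flag: from $\ker\lambda(\Delta_1)=0$ you write ``in particular $\Delta_1$ has a spectral gap at $0$'', which does not follow (a positive operator can have trivial kernel and $0$ in its spectrum). But you do not need a spectral gap here at all. In the paper's framework, Definition~\ref{def:deloc l2betti num formula} only requires that $[p_n]$ lie in $\mathrm{K}_0(\Csr(G))$; once you have $p_1=0$ directly from the Hodge--de~Rham isomorphism, the zero projection trivially defines the class $[0]\in\mathrm{K}_0(\Csr(G))$, and every delocalised trace pairs with it to give zero. This is precisely the point of the Remark preceding Proposition~\ref{prop:l2 vanishing}: ``the key ingredient \dots\ is that $p_n$ is zero.'' Note also that the stated Proposition~\ref{prop:l2 vanishing} carries no spectral-gap hypothesis. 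So you can drop the entire spectral-gap paragraph; what remains is correct and coincides with the paper's argument.
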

	
	\begin{proof}
		 Groups with property (T) have vanishing first $\ell^2$-cohomology \cite{delorme1977}. This implies that $\beta_1^{(2)}(G)$ vanishes.
		Proposition \ref{prop:l2 vanishing} can be now applied.
	\end{proof}

 	\begin{proposition} \label{prop:vir.nilpotent}
		Let $G$ be a virtually nilpotent group of type $F_{n+1}$. The delocalised $\ell^2$-Betti numbers $\beta^ {(2)}_{n, \langle{g}\rangle} (G)$ vanish for all $n\in \mathbb N$ and all $g\in G$.
	\end{proposition}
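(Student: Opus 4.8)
The plan is to reduce everything to the single observation that, for a virtually nilpotent group $G$ of type $F_{n+1}$, the $n$-th higher Kazhdan projection $p_n$ is the zero operator; once this is established the conclusion is immediate, since any delocalised pairing with the zero projection vanishes.

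To prove $p_n=0$, I would first note that a virtually nilpotent group is amenable (nilpotent groups are amenable, and amenability passes to groups containing an amenable finite-index subgroup), so the Cheeger--Gromov vanishing theorem applies and all $\ell^2$-Betti numbers of $G$ vanish: $\beta^{(2)}_n(G)=0$ for every $n\geq 1$, and also in degree $0$ whenever $G$ is infinite, so $\beta^{(2)}_n(G)=0$ in every case relevant to the statement. Now $p_n\in \mathrm M_{k_n}(\mathcal N(G))$, with $\mathcal N(G)=\lambda(G)''$ the group von Neumann algebra, is the orthogonal projection of $(\ell^2 G)^{\oplus k_n}$ onto the space of harmonic $n$-cochains, which by Hodge--de Rham is the reduced $\ell^2$-cohomology $\overline{\mathrm H}^n(G,\ell^2 G)$. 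By the very definition of the $n$-th $\ell^2$-Betti number as $\dim_{\mathcal N(G)}\overline{\mathrm H}^n(G,\ell^2 G)$ (cf.\ (\ref{def:l2 betti number definition})) one has $\tau(p_n)=\beta^{(2)}_n(G)=0$, where $\tau$ denotes the canonical trace applied entrywise; since $\tau$ is a faithful trace and $p_n$ is a projection, this forces $p_n=0$.

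With $p_n=0$ the conclusion follows at once: the zero matrix lies in $\mathrm M_{k_n}(\mathbb C G)\subseteq \mathrm M_{k_n}(\Csr(G))$, hence $[p_n]=0\in \mathrm K_0(\Csr(G))$, and for every $g\in G$ the delocalised trace $\tau_{\langle g\rangle}$ --- which is defined already on $\ell^1(G)\supseteq \mathbb C G$, hence on $\mathrm K_0$ of any smooth subalgebra containing $\mathbb C G$ --- sends $p_n$, equivalently $[p_n]$, to $0$. Thus $\beta^{(2)}_{n,\langle g\rangle}(G)=\tau_{\langle g\rangle}([p_n])=0$ for all $n$ and all $g\in G$. (Should one want the hypotheses of Definition \ref{def:deloc l2betti num formula} literally in place, one can observe that a virtually nilpotent group has polynomial growth, so Jolissaint's algebra of rapidly decreasing functions is a smooth subalgebra of $\Csr(G)$ to which all delocalised traces extend --- conjugacy classes in a polynomial-growth group grow at most polynomially --- but since $p_n$ is already zero in $\mathbb C G$ this is not needed.)

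The point worth emphasising, and the reason the argument has no technical core, is precisely the content of the remark preceding the proposition: virtually nilpotent groups need not be hyperbolic, so Puschnigg's subalgebras are unavailable, and for amenable groups one does not expect a spectral gap for $\Delta_n$, so the analytic formula (\ref{analytic def}) does not directly apply either; the statement is rescued because the projection itself vanishes, making the pairing $0$ in whichever completion one cares to work. The only genuine input is the Cheeger--Gromov theorem together with faithfulness of the von Neumann trace, so there is really no obstacle to overcome --- the only care needed is in checking that the setup of Definition \ref{def:deloc l2betti num formula} (or of the analytic definition) is not in fact a prerequisite here.
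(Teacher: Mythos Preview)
Your proposal is correct and follows essentially the same approach as the paper: both arguments use Cheeger--Gromov to conclude that $p_n=0$ (via faithfulness of the canonical trace, which you make explicit), and both identify the rapid decay algebra $L^2_{RD}(G)$ as the relevant smooth subalgebra for virtually nilpotent groups. The paper's proof is terser and places the existence of the Puschnigg subalgebra first, whereas you relegate it to a parenthetical at the end, but the mathematical content is identical.
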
  
 
	\begin{proof}
		 Virtually nilpotent groups have polynomial growth \cite{gromov81} and hence property RD \cite{jolissaint1990property}. 
         This implies that the smooth subalgebra 
          $L^2_{RD}(G) \subseteq \ell^1(G) \subseteq \Csr(G)$ from \cite{Jolissaint} is a Puschnigg subalgebra. 
          To see that the delocalised trace $\tau_{\langle g \rangle}$ extends continuously to $L_{RD}^2(G)$, note that because $G$ has polynomial growth, the size of the intersection of any conjugacy class $\langle g \rangle$ with the ball of radius $r$ grows at most polynomially in $r$. By definition, any function $f \in L_{RD}^2(G)$ decays faster than any polynomial with respect to the word length. Therefore, the sum
\[
\tau_{\langle g \rangle}(f) = \sum_{h \in \langle g \rangle} f(h)
\]
converges absolutely for all $f \in L_{RD}^2(G)$. This guarantees that $\tau_{\langle g \rangle}$ extends to a well-defined, continuous trace on $L_{RD}^2(G)$.
          Furthermore, infinite amenable groups have vanishing $\ell^2$-Betti numbers \cite{cheegergromov86}, their higher Kazhdan projections $p_n$ and hence their delocalised $\ell^2$-Betti numbers vanish.
	\end{proof}
		
	
	The following proposition can be compared with \cite[Proposition 9]{lott1999} which deals with closed oriented hyperbolic manifolds.
	
	\begin{proposition}
		\label{prop:torfreehyper}
		Let $G$ be a torsion-free hyperbolic group. 
		Assume that $\Delta_n$ has a spectral gap.
		Then the $n$-th delocalised $\ell^2$-Betti numbers vanish for all non-trivial $g\in G$.
	\end{proposition}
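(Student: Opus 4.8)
The plan is to use the K-theoretic definition of the delocalised $\ell^2$-Betti number, so $\beta^{(2)}_{n,\langle g\rangle}(G) = \tau_{\langle g\rangle}([p_n])$, and to show that the delocalised trace annihilates $[p_n]$ whenever $g\neq e$. The key structural fact is that for a torsion free group the conjugacy class of any $g\neq e$ is infinite: indeed $\langle g\rangle$ is finite iff the centraliser $C_G(g)$ has finite index, and for a torsion free hyperbolic group a finite-index subgroup would force $C_G(g)$, hence $G$, to be virtually cyclic with $g$ of infinite order lying in the center of a finite-index subgroup, and one checks this cannot happen for $g\neq e$ in a torsion free hyperbolic group (alternatively: torsion free hyperbolic groups have trivial center unless cyclic, and the virtually cyclic case is elementary). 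So $|\langle g\rangle| = \infty$.

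Next I would bring in the heat-kernel picture. By the Proposition proved above, since $\Delta_n$ has a spectral gap, $p_n \in \mathrm M_{k_n}(\mathcal S)$ for a Puschnigg subalgebra $\mathcal S\subseteq \Csr(G)$ (which exists by hyperbolicity), and $e^{-t\Delta_n}\to p_n$ in $\mathrm M_{k_n}(\mathcal S)$ as $t\to\infty$. Since $\tau_{\langle g\rangle}$ extends to a bounded trace on $\mathrm M_{k_n}(\mathcal S)$, we get
\[
\beta^{(2)}_{n,\langle g\rangle}(G) = \tau_{\langle g\rangle}(p_n) = \lim_{t\to\infty}\tau_{\langle g\rangle}(e^{-t\Delta_n}).
\]
Now $e^{-t\Delta_n}$ lies in $\mathrm M_{k_n}(\ell^1(G))$ for each finite $t$ (the combinatorial Laplacian is a finite matrix over $\mathbb Z G$, so its heat semigroup has rapidly decaying coefficients and in particular is $\ell^1$), and on $\mathrm M_{k_n}(\ell^1(G))$ the delocalised trace is literally the sum of the matrix-diagonal coefficients over the conjugacy class $\langle g\rangle$. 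The remaining point is to show this quantity tends to $0$ as $t\to\infty$.

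The main obstacle is precisely this last step: controlling $\sum_{h\in\langle g\rangle}(e^{-t\Delta_n})_{h}$ (summed over diagonal matrix entries) as $t\to\infty$. The idea is that for fixed $t$ the heat kernel coefficients are summable, and the total $\ell^1$-mass is bounded independently of $t$ in a suitable sense after the spectral gap kicks in; more precisely, writing $e^{-t\Delta_n} = p_n + e^{-t\tilde\Delta_n}$ with $\tilde\Delta_n = (\mathrm{id}-p_n)\Delta_n(\mathrm{id}-p_n)$ bounded below by $\lambda_0>0$, the correction term decays as $C e^{-t\lambda_0/2}$ in the $\mathcal S$-norm (this is exactly the estimate in the Proposition above). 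Since $\tau_{\langle g\rangle}$ is $\mathcal S$-norm continuous, $\tau_{\langle g\rangle}(e^{-t\tilde\Delta_n})\to 0$, so it suffices to show $\tau_{\langle g\rangle}(p_n)=0$ directly. But $p_n$ is realised as a strong limit of heat operators whose $\ell^1$-coefficients decay rapidly along $G$, and because $\langle g\rangle$ is infinite the "$\langle g\rangle$-part" of $p_n$ must vanish: concretely, one shows that any element of $\mathcal S$ that is a limit of the $e^{-t\Delta_n}$ has a well-defined delocalised trace at $\langle g\rangle$ equal to $\lim_t \tau_{\langle g\rangle}(e^{-t\Delta_n})$, and the latter is $0$ because for each fixed $t$ the mass $\sum_{h\in \langle g\rangle}|(e^{-t\Delta_n})_h|$ is finite while the $\mathcal S$-norm convergence forces the tails (over the infinite set $\langle g\rangle$, which meets arbitrarily large spheres in $G$) to disappear in the limit. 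Packaging this cleanly — i.e. interchanging the limit in $t$ with the sum over the infinite conjugacy class — is where the Puschnigg subalgebra and its continuous delocalised trace do the real work, and I would lean on \cite{puschnigg2010} together with the convergence Proposition above rather than re-deriving the decay estimates by hand. I expect the write-up to mirror \cite[Proposition 9]{lott1999}.
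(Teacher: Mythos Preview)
Your proposal has a genuine gap at the decisive step. After correctly reducing to $\tau_{\langle g\rangle}(p_n)=\lim_{t\to\infty}\tau_{\langle g\rangle}(e^{-t\Delta_n})$ via the Puschnigg subalgebra and the convergence proposition, you assert that this limit is $0$ ``because for each fixed $t$ the mass is finite while the $\mathcal S$-norm convergence forces the tails to disappear in the limit.'' This does not follow. Convergence in $\mathcal S$ together with continuity of $\tau_{\langle g\rangle}$ tells you the limit \emph{exists} and equals $\tau_{\langle g\rangle}(p_n)$; it says nothing about that value being zero. The infinitude of $\langle g\rangle$ is likewise irrelevant here: a summable sequence over an infinite index set can perfectly well have nonzero sum, and there is no a priori reason why the $\langle g\rangle$-coefficients of $p_n$ should cancel. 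Your argument, as written, would equally ``prove'' $\tau_{\langle g\rangle}(x)=0$ for any $x\in\mathcal S$ obtained as a limit of $\ell^1$-elements, which is absurd. (Incidentally, the claim that every non-trivial conjugacy class is infinite already fails for $G=\mathbb Z$, though in that case $p_n=0$ trivially.)

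The paper's proof takes a completely different route and supplies the missing structural input. It invokes the Baum--Connes conjecture for hyperbolic groups (Lafforgue) to write $[p_n]=\mathrm{ind}_G(D)$ for a $G$-equivariant elliptic operator $D$ on a manifold $M$ with proper cocompact $G$-action, and then applies the $L^2$-Lefschetz fixed point formula of Wang--Wang: $\tau_{\langle g\rangle}(\mathrm{ind}_G(D))$ is an integral over the fixed point set $M^g$. Torsion-freeness forces all isotropy groups to be trivial, so the action is free, $M^g=\varnothing$ for $g\neq e$, and the Lefschetz number vanishes. The vanishing is thus a statement about the \emph{range of the assembly map} under delocalised traces for torsion-free groups, not about decay of heat kernels; without Baum--Connes or an equivalent K-theoretic input, the analytic approach you sketch cannot close.
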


\begin{proof}
Since $G$ is a hyperbolic group, the Baum-Connes conjecture holds by the work of Lafforgue \cite{lafforgue2002}. Thus, the assembly map $\mu: \mathrm K_0^G(\underline{E}G) \to \mathrm K_0(\Csr(G))$ is an isomorphism. Because $\Delta_n$ has a spectral gap and $G$ is hyperbolic, the K-class of the higher Kazhdan projection $[p_n]$ belongs to $\mathrm K_0(\Csr(G))$ (cf. Corollary 2.10). By the surjectivity of the assembly map, there exists an equivariant K-homology class $x \in \mathrm K_0^G(\underline{E}G)$ such that $\mu_0(x) = [p_n]$. 

By the definition of the equivariant K-homology of $\underline{E}G$ as a direct limit, there exist a $G$-finite, proper $G$-CW-complex $X$, an element $y\in \mathrm K_0^G(X)$ given by a Fredholm module, and a $G$-equivariant map $f: X\rightarrow \underline{E}G$ such that the induced map $f_*: \mathrm K_0^G(X)\rightarrow \mathrm K_0^G(\underline{E}G)$ sends $y$ to $x$. By definition, $\mu_0(f_*(y))$ is the higher index of $y$, denoted $\mathrm{ind}_G(y)$. Thus, $\mathrm{ind}_G(y)=[p_n]$.  

Furthermore, it was shown in \cite{BaumHigsonSchick2010} that for spaces with proper and cocompact discrete group actions, the analytic K-homology $\mathrm K_0^G(X)$ is naturally isomorphic to the Baum-Douglas geometric $K$-homology. Under this isomorphism, $y$ can be represented by a geometric cycle $(M, E, \phi)$ such that $y=\phi_*[D_E]$. Here, $M$ is a complete $\mathrm{Spin}^c$-manifold equipped with a proper, cocompact, and isometric action of $G$; $E$ is a $\mathbb{Z}_2$-graded $G$-equivariant Hermitian Clifford module over $M$; $\phi: M\rightarrow X$ is a $G$-equivariant continuous map; and $D_E$ is the $G$-equivariant elliptic differential operator of Dirac type associated to the $\mathrm{Spin}^c$-structure and the twisting bundle $E$. Consequently, the image of this geometric cycle under the assembly map is the higher index of the operator, i.e. $\mathrm{ind}_G(D_E)=\mathrm{ind}_G(y) = [p_n] \in \mathrm K_0(\Csr(G))$. 

We now apply the $L^2$-Lefschetz fixed-point formula to evaluate the delocalised trace. In our situation, the assumptions of the localised index theorem in \cite{WangWang16} are automatically satisfied: $M$ is a proper cocompact $G$-manifold, and the standard finite propagation speed and heat kernel asymptotics for the Dirac-type operator $D$ on $M$ ensure that the heat operators $e^{-tD^2}$ are of $\langle g \rangle$-trace class \cite[Section 3]{WangWang16}. The compactness of $M/G$ is essential to the finite $\langle g \rangle$-trace estimate. 

By \cite[Theorem 6.1]{WangWang16}, the delocalised trace of the higher index is given by a cohomological formula over the fixed-point submanifold $M^g$:
\begin{equation*}
\tau_{\langle g \rangle}(\mathrm{ind}_G(D_E)) = \int_{M^g / Z_G(g)} A_S(g)(x) \, dx,
\end{equation*}
where $A_S(g)(x)$ is the local Lefschetz integrand associated to the element $g$, given by characteristic forms restricted to the fixed-point submanifold $M^g$. 
Because the action of $G$ on $M$ is proper, all isotropy subgroups must be finite. 
Note that the in order to apply $\tau_{\langle g \rangle}$ to $\mathrm{ind}_G(D_E)\in \mathrm K_0(\Csr(G))$ we required hyperbolicity of $G$. 
However, since $G$ is torsion-free by hypothesis, the isotropy subgroup for $g\neq e$ is trivial. This implies that the action of $G$ on $M$ is free. 
Consequently, for any non-trivial element $g \in G$ (i.e., $g \neq e$), the fixed-point submanifold $M^g$ is empty. The integral over an empty set vanishes, yielding:
\begin{equation*}
\beta_{n,\langle g\rangle}^{(2)}(G) = \tau_{\langle g \rangle}([p_n]) = \tau_{\langle g \rangle}(\text{ind}_G(D_E)) = 0.
\end{equation*}
This concludes the proof.
\end{proof}

	The previous proposition confirms Lott's speculation concerning vanishing of delocalised $\ell^2$-Betti numbers for discrete torsion-free groups. See \cite[page 6]{lott1999}. 
	
	\begin{remark}
		In the proof of Proposition~\ref{prop:torfreehyper}, we used vanishing of the $L^2$-Lefschetz number $\chi_g(G)$. It turns out that the non-vanishing of $\chi_g(G)$ for the Euler operator is an obstruction to the vanishing of the delocalised $\ell^2$-Betti numbers. Indeed, we have 
		\[
		\chi_g(G)=\sum_{n\ge 0}(-1)^n\beta^{(2)}_{n, \langle g\rangle}(G). 
		\]  
		If $\chi_g(G)$ is not vanishing, then there exists an $n$ such that $\beta^{(2)}_{n, \langle g\rangle}(G)\neq 0$.
		This generalises Proposition 11 on page 29 of \cite{lott1999} from the context of finite groups to that of  discrete groups with proper cocompact action. 
	\end{remark}
	
	\section{Computations of higher Kazhdan projections and \mbox{delocalised}~ \texorpdfstring {$\ell^2$-} -Betti numbers}
	\label{sec: computations}
	
	In this section we describe the K-theory class of higher Kazhdan projections $p_n$ for $n>0$ in specific classes of groups. The results are obtained through the identification of the kernel of $\Delta_n$. As an application, we compute the delocalised $\ell^2$-Betti numbers. These establish the first non-vanishing results for infinite groups.
	
	\begin{lemma} \label{lem:existence}
		Let $G$ be a virtually free group. Then $p_1$ lies in matrices over $\Csr(G)$, its K-class belongs to
		$\mathrm{K_0}(\Csr(G))$ and it is non-zero. All other $p_n$'s vanish when $n\neq 1$.
	\end{lemma}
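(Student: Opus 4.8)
The plan is to reduce everything to a computation of $\ell^2$-cohomology, using the standard structure theory of virtually free groups together with the spectral-gap criterion (Lemma \ref{lem:spectral gap condition}) and the fact that hyperbolic groups admit Puschnigg subalgebras. First I would recall that a finitely generated virtually free group $G$ is the fundamental group of a finite graph of finite groups, and in particular it is of type $F_\infty$ and is hyperbolic; fix a finite model $X$ of $\mathrm{B}G$ with a $1$-dimensional universal cover (a tree with finite vertex and edge stabilisers modulo $G$), so that the cochain complex computing $\mathrm H^\ast(G,\ell^2(G))$ is concentrated in degrees $0$ and $1$. Hence $\Delta_n = 0$ for $n \geq 2$, which forces $p_n = 0$ for $n \geq 2$; and $p_0$ is the projection onto the harmonic $0$-cochains, i.e.\ onto $\overline{\mathrm H}^0(G,\ell^2 G) = (\ell^2 G)^G = 0$ since $G$ is infinite, so $p_0 = 0$ as well. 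This disposes of all $n \neq 1$.

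Next, for $n = 1$ I would argue that $\Delta_1$ has a spectral gap at $0$. Since the complex vanishes in degree $2$, the cohomology is trivially reduced in degree $2$; and the cohomology is reduced in degree $1$ because $\overline{\mathrm H}^1(G,\ell^2 G) = \mathrm H^1(G,\ell^2 G)$ for any group with more than one end — virtually free infinite groups have infinitely many ends (unless virtually $\mathbb Z$, which I should treat as the degenerate case where $\beta_1^{(2)} = 0$ but the gap argument still applies, or simply note $p_1 = 0$ there). One clean way: $\mathbb F_k$ has $\Delta_1$ with a spectral gap by the explicit computation recalled in the introduction, and passing to a finite-index free subgroup $H \leq G$, the restriction of $\Delta_1$ for $G$ is, up to Morita equivalence / induction, controlled by the corresponding Laplacian for $H$, so the gap is inherited. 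By Lemma \ref{lem:spectral gap condition} this gives $p_1 \in \mathrm M_{k_1}(\Csr(G))$, and by the Corollary following the heat-kernel Proposition (hyperbolicity $+$ spectral gap), $[p_1] \in \mathrm K_0(\Csr(G))$.

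Finally, non-vanishing of $p_1$: by \eqref{def:l2 betti number definition}, $\tau([p_1]) = \beta_1^{(2)}(G)$, and by the Euler characteristic formula for $\ell^2$-Betti numbers, $\beta_1^{(2)}(G) = -\chi(G) = -1/|F| \cdot (\text{something}) > 0$ for a non-elementary virtually free group — concretely, for $\mathbb F_k$ it is $k-1 > 0$, and in general $\beta_1^{(2)}(G) = -\chi^{(2)}(G) > 0$ whenever $G$ is not virtually cyclic, since $\beta_0^{(2)}(G) = \beta_{\geq 2}^{(2)}(G) = 0$ and $\chi(G) < 0$. Since the canonical trace $\tau$ is faithful and positive on $\mathrm K_0$, $\beta_1^{(2)}(G) > 0$ forces $[p_1] \neq 0$, hence $p_1 \neq 0$. (In the virtually cyclic case one simply observes $p_1 = 0$ directly, and the statement's hypothesis "virtually free" is typically used in the non-elementary sense; I would add a remark clarifying this.) I expect the main obstacle to be the spectral-gap verification for $\Delta_1$ in the general virtually free case — the degree-$0$ and degree-$\geq 2$ parts are essentially formal, but establishing the gap in degree $1$ requires either invoking that virtually free groups have a free subgroup of finite index and carefully tracking how induction of representations interacts with the combinatorial Laplacians on the two different $\mathrm{B}G$-models, or else citing a known spectral-gap result for the $1$-Laplacian of groups with infinitely many ends.
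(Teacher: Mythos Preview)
Your argument has a genuine gap at the first step: a virtually free group $G$ with torsion does \emph{not} admit a finite-dimensional model for $\mathrm{B}G$. The Bass--Serre tree you invoke is a model for $\underline{E}G$ (the classifying space for proper actions), not for $EG$; its quotient graph of finite groups is not a $K(G,1)$. The groups $\mathbb Z_m * \mathbb Z_n$ treated immediately after this lemma have infinite cohomological dimension, and the cochain complex built from the presentation complex in Lemma~\ref{lem:Delta1} has nonzero terms in every degree. Two of your steps therefore break: you cannot conclude $p_n = 0$ for $n \geq 2$ from ``the complex is concentrated in degrees $0$ and $1$'', and you cannot conclude that $\mathrm H^2(G,\ell^2 G)$ is trivially reduced for the same reason.

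The paper repairs both points without any dimension argument. For the spectral gap of $\Delta_1$ it uses Lemma~\ref{lem:spectral gap condition} as you do: $\mathrm H^1(G,\ell^2 G)$ is reduced because $G$ is non-amenable, and $\mathrm H^2(G,\ell^2 G) = 0$ follows from the Shapiro lemma applied to a finite-index free subgroup $\mathbb F_k \leq G$. For the vanishing of $p_n$ when $n \neq 1$, the paper does not argue via the complex at all: it uses $\beta_n^{(2)}(G) = 0$ for $n \neq 1$ together with faithfulness of the canonical trace, so that $\tau(p_n) = \beta_n^{(2)}(G) = 0$ forces $p_n = 0$ as a projection. This last step needs no spectral gap --- the zero projection trivially lies in matrices over $\Csr(G)$. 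Your instinct in the final paragraph, that the finite-index free subgroup is the key input, is correct; it just enters via Shapiro and via $\ell^2$-Betti-number inheritance rather than via a comparison of Laplacians on different models.
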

	
	\begin{proof}
		Assume that some free group $\mathbb F_k$ is a finite index subgroup of $G$. To establish the presence of a spectral gap, we rely on Lemma \ref{lem:spectral gap condition}. The non-amenability of $G$ implies that $\mathrm H^1(G, \ell^2 (G))$ is reduced, and  
		Shapiro lemma implies that $\mathrm H^2(G, \ell^2(G))$ is reduced (even vanishes) as it is so for $\mathbb F_k$. Consequently, $p_1$ belongs to matrices over $\Csr(G)$. 
		Recall equation \ref{def:l2 betti number definition}
		$\tau([p_1]) = \beta^{(2)}_{1}(G)$. Since $\beta^{(2)}_{1}(G)$ is non-zero, it follows that $[p_1]$ is non-zero. Additionally, given the faithfulness of $\tau$, we note that $\beta^{(2)}_{n}(G)$ vanishes for $n> 1$, leading to the vanishing of $p_n$.
	\end{proof}
	
	\subsection{Free product of finite cyclic groups}
	Consider the free product $G= \mathbb Z_m * \mathbb Z_n$ for $m \geq 2$ and $n \geq 3$. These groups are known to be virtually free. The goal is to find representatives for the K-class $[p_1]$ associated to these groups. As a byproduct, we compute the delocalised $\ell^2$-Betti numbers of $G$.
	
	\begin{lemma}
		\label{lem:Delta1}
		Let $G= \mathbb Z_m * \mathbb Z_n$ for $m \geq 2$ and $n \geq 3$. The first Laplacian $\Delta_1$ for $G$ is
		\[
		\Delta_1=d_0d_0^*+d_1^*d_1=
		\begin{bmatrix}2-s-s^{-1}+m\sum_{0 \leq i < m}s^i & (1-s^{-1})(1-t)\\
			(1-t^{-1})(1-s) & 2-t-t^2+n\sum_{0 \leq j < n}t^j
		\end{bmatrix},
		\]
		where $s$ and $t$ are generators for $\mathbb Z_m$ and $\mathbb Z_n$, respectively. 
	\end{lemma}
	
	\begin{proof}
		In order to describe the Laplacian $\Delta_1$ we need a suitable cochain complex. For that, we take the free resolution of the trivial $\mathbb ZG$-module $\mathbb Z$ associated with the presentation complex of $G$, which is
		\begin{equation*}
			\cdots 
			\longrightarrow 
			\mathbb Z G^{\oplus 2} 
			\stackrel{\delta_1}\longrightarrow 
			\mathbb Z G ^ {\oplus  2} 
			\stackrel{\delta_0}\longrightarrow 
			\mathbb ZG 
			\stackrel{\epsilon}\longrightarrow 
			\mathbb Z 
			\longrightarrow
			0 ,
		\end{equation*}
		where the connecting maps are defined as follows:

\[
\begin{aligned}
\epsilon(g) &= 1 \quad g\in G, \\[6pt]
\delta_0(1,0) &= 1-s \quad s\in \mathbb Z_m ,
&\qquad
\delta_0(0,1) &= 1-t \quad t\in \mathbb Z_n,\\[4pt]
\delta_1(1,0) &= \bigl(\sum_{0 \le i < m} s^i, 0\bigr),
&\qquad
\delta_1(0,1) &= \bigl(0, \sum_{0 \le j < n} t^j\bigr).
\end{aligned}
\]
       
		Applying the functor $\mathrm {Hom}_{\mathbb Z G}( \,\cdot \,, \ell^ 2(G))$ and identifying the terms with direct sums of $\ell^2(G)$, yield
		\begin{equation*}
			\ell^ 2 (G) 
			\stackrel{d_0}\longrightarrow 
			\ell^ 2(G)^ {\oplus  2} 
			\stackrel{d_1}\longrightarrow
			\ell^ 2(G)^ {\oplus 2}
			\longrightarrow 
			\cdots,
		\end{equation*}
		where the connecting maps can be identified with the operators
		\begin{gather*}
			d_0 = \begin{bmatrix}
				1-s & 1-t
			\end{bmatrix}, \qquad 
			d_1 = \begin{bmatrix}
				\sum_{0 \leq i < m} s^i & 0 
				\\
				0   & \sum_{0 \leq j < n} t^j
			\end{bmatrix}.
		\end{gather*}
		Taking the transpose and computing $\Delta_1 = d_0d_0^*+d_1^*d_1$, we obtain the desired operator.
	\end{proof}
	
	\noindent The concrete $\Delta_1$ from Lemma~\ref{lem:Delta1} is an essential ingredient to the subsequent main theorem.
	
	\begin{theorem} \label{thm: Z_n*Z_m}
		Let $G= \mathbb Z_m \ast \mathbb Z_n$ with $m \geq 2$ and $n \geq 3$. The K-class of the higher Kazhdan projection $p_1$ in $\mathrm K_0(\Csr(G))$ can be represented by
		\[
		[p_1] = [1] - \left[\frac{1}{m}
		\sum_{0 \leq i < m}{s^i}\right] - 
		\left[\frac{1}{n}
		\sum_{0 \leq j <n}{t^j}\right].
		\]
	\end{theorem}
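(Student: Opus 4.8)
The plan is to obtain $[p_1]$ from a Hodge-type splitting of the space of harmonic $1$-cochains inside the $\ell^2$-cochain complex
\[
\ell^2(G)\xrightarrow{d_0}\ell^2(G)^{\oplus 2}\xrightarrow{d_1}\ell^2(G)^{\oplus 2}\xrightarrow{d_2}\cdots
\]
from the proof of Lemma~\ref{lem:Delta1}, and then to read off the resulting two K-classes from the explicit matrices computed there. Since $\Delta_1=d_0d_0^*+d_1^*d_1$, a cochain $v$ is harmonic exactly when $d_1v=0$ and $d_0^*v=0$; as $\ker d_0^*=(\overline{\mathrm{im}\,d_0})^{\perp}$ and $\overline{\mathrm{im}\,d_0}\subseteq\ker d_1$, this gives the orthogonal decomposition $\ker d_1=\overline{\mathrm{im}\,d_0}\oplus\ker\Delta_1$. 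Virtual freeness and non-amenability of $G$ (cf.\ Lemma~\ref{lem:existence}) give, via Lemma~\ref{lem:spectral gap condition}, a spectral gap at $0$ for $\Delta_1$, so that $\mathrm{im}\,d_0$ and $\mathrm{im}\,d_1$ are closed; moreover $\ker d_0=\mathrm{H}^0(G,\ell^2(G))=0$ because $G$ is infinite. Writing $P_1$ for the projection onto $\ker d_1$ and $Q_0$ for the projection onto $\overline{\mathrm{im}\,d_0}$, I will check that $P_1$, $Q_0$ and hence $p_1=P_1-Q_0$ all lie in $\mathrm M_2(\Csr(G))$, so that $[p_1]=[P_1]-[Q_0]$ in $\mathrm K_0(\Csr(G))$.

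For $[Q_0]$: the element $d_0^*d_0=\Delta_0$ belongs to $\Csr(G)$, is positive, and is bounded below, since $d_0$ is injective with closed range and so $\|d_0\xi\|\ge c\|\xi\|$ by the open mapping theorem. Hence $(d_0^*d_0)^{-1/2}\in\Csr(G)$ by holomorphic functional calculus, and $v:=d_0(d_0^*d_0)^{-1/2}$ is a partial isometry over $\Csr(G)$ with $v^*v=1_{\ell^2(G)}$ and $vv^*=Q_0$. Thus $Q_0$ is Murray--von Neumann equivalent to the identity of a single copy of $\ell^2(G)$, i.e.\ $[Q_0]=[1]$.

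For $[P_1]$: by Lemma~\ref{lem:Delta1}, $d_1=\mathrm{diag}\bigl(\sum_{0\le i<m}s^i,\ \sum_{0\le j<n}t^j\bigr)$. Set $e_s=\tfrac1m\sum_{0\le i<m}s^i$ and $e_t=\tfrac1n\sum_{0\le j<n}t^j$. Each is a self-adjoint idempotent in $\mathbb C G$ (from $s\,e_s=e_s$ one gets $e_s^2=e_s$, and $e_s^*=e_s$ since $\{s^i\}$ is closed under inversion), and $\sum_{0\le i<m}s^i=m\,e_s$, $\sum_{0\le j<n}t^j=n\,e_t$. Consequently $\ker d_1=(1-e_s)\ell^2(G)\oplus(1-e_t)\ell^2(G)$, so $P_1=\mathrm{diag}(1-e_s,1-e_t)\in\mathrm M_2(\mathbb C G)$ and
\[
[P_1]=[1-e_s]+[1-e_t]=2[1]-[e_s]-[e_t].
\]
Combining this with $[Q_0]=[1]$ gives $[p_1]=2[1]-[e_s]-[e_t]-[1]=[1]-[e_s]-[e_t]$, which is the asserted formula; as a consistency check, applying the canonical trace returns $\beta^{(2)}_1(G)=1-\tfrac1m-\tfrac1n$.

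The step I expect to require the most care is the first one: the decomposition $\ker d_1=\overline{\mathrm{im}\,d_0}\oplus\ker\Delta_1$ is a priori only an identity of Hilbert $G$-modules (equivalently, of projections in the group von Neumann algebra), and the substance of the argument is to show it descends to an identity of projections in $\mathrm M_2(\Csr(G))$. This is precisely where the spectral gap for $\Delta_1$, the induced spectral gap for $\Delta_0=d_0^*d_0$, and holomorphic functional calculus are used; once these are in place, the rest is the short linear-algebra computation of $P_1$ and the identification $[Q_0]=[1]$ above.
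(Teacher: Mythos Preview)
Your argument is correct and takes a genuinely different, more economical route than the paper. The paper computes $\ker\Delta_1$ explicitly (Lemmas~\ref{lem:SolDelta1} and~\ref{lem:projker}), conjugates it by an invertible $V\in\mathrm M_2(\Csr(G))$ to an auxiliary subspace $H_1$, and then proves two further decomposition lemmas (Lemmas~\ref{lem:directsum} and~\ref{lem:H3}) comparing several Hilbert subspaces $H_i,\tilde H_i$ in order to extract $[P_{H_1}]=2[1]-[1]-[p]-[q]$. By contrast, you never touch $\ker\Delta_1$ directly: the Hodge splitting $p_1=P_1-Q_0$ reduces everything to the two easy pieces $P_1=\mathrm{diag}(1-e_s,1-e_t)\in\mathrm M_2(\mathbb C G)$ and $Q_0=d_0(d_0^*d_0)^{-1}d_0^*\in\mathrm M_2(\Csr(G))$, the latter being Murray--von Neumann equivalent to $1$ via the isometry $d_0(d_0^*d_0)^{-1/2}$. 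This bypasses all of Lemmas~\ref{lem:SolDelta1}--\ref{lem:H3}.

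Your self-flagged ``care point'' is in fact not an obstacle. Once you have shown $P_1\in\mathrm M_2(\mathbb C G)$ and $Q_0\in\mathrm M_2(\Csr(G))$ individually (which you do), the difference $P_1-Q_0$ automatically lies in $\mathrm M_2(\Csr(G))$ and, viewed in $\mathcal B(\ell^2(G)^{\oplus2})$, coincides with the projection onto $\ker\Delta_1$; there is nothing further to ``descend''. The only substantive analytic input is the invertibility of $\Delta_0=d_0^*d_0$ in $\Csr(G)$, and this follows exactly as you say: $\ker d_0=0$ since $G$ is infinite, and $\Delta_0$ has a spectral gap by Lemma~\ref{lem:spectral gap condition} because $\mathrm H^0$ vanishes and $\mathrm H^1$ is reduced by non-amenability (cf.\ the proof of Lemma~\ref{lem:existence}). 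What your approach buys is brevity and a template that should transport to other presentation complexes; what the paper's approach buys is an explicit parametrisation of the harmonic cochains themselves, which is of some independent interest (see Example~\ref{example:PSL(2,Z)}).
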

	\begin{remark}
        In a recent preprint we describe the K-theory class $[p_1]$
      for non-amenable virtually free groups, including free products $\mathbb Z_m \ast \mathbb Z_n$.
 See \cite[Corollaries 6.3 and 6.5]{pooyarenwang2025}. The formula obtained there expresses $[p_1]$
as an alternating sum of averaging projections associated to the isotropy subgroups appearing in a fundamental domain for the action of $G$ on its Bass–Serre tree:
       \begin{equation*}
        [p_1]=\sum_{v\in \, \mathrm{edge}\ (G \backslash X)}{[p_{v}]}\,\,\,\,\,-\sum_{e\in \, \mathrm{vert}\  (G \backslash X)}{[p_{e}]}\in \mathrm{K_0(C^*_{red}}(G)).     
    \end{equation*}    
 In the specific case  
        $G= \mathbb Z_m \ast \mathbb Z_n$ the fundamental domain consists of a single edge with trivial isotropy and two vertices with isotropy 
        $\mathbb Z_m$ and $\mathbb Z_n$. Thus the description of $[p_1]$ given in Theorem~\ref{thm: Z_n*Z_m} agrees with the formula of \cite[Corollary 6.5]{pooyarenwang2025}. 
        More generally, the framework developed in \cite{pooyarenwang2025} is closely related to the combinatorial Euler characteristic 
$Eul^{cmb}(G)$
 introduced by Emerson and Meyer \cite{EM}. We show in \cite[Theorem 4.1]{pooyarenwang2025} that if $p_n$
 is the only non-vanishing higher Kazhdan projection of $G$, then its K-theory class can be expressed as an alternating sum of averaging projections associated with the finite isotropy occurring in a suitable
$G$-finite model of $\underline EG$.   
\end{remark}
    
	To describe the K-class of Kazhdan projection we will solve the equation $\Delta_1x=0$, for $x\in \ell^2 (G)^{\oplus 2}$. This requires some preparatory lemmas. Before that we fix some notations.
	

\noindent \textbf{Notation.} Let $p$ and $q$ be the averaging projections of the finite subgroups $\mathbb{Z}_m$ and $\mathbb{Z}_n$ in the group algebra. When acting on the left regular representation, they project onto the $\mathbb{Z}_m$- and $\mathbb{Z}_n$-invariant subspaces, respectively, i.e.,
\begin{equation}
    \label{eq:pq}
    p=\frac{1}{m}{\sum_{0 \leq i < m}{s^i}} \quad \text{and} \quad 
    q=\frac{1} {n} {\sum_{0 \leq j <n}{t^j}}.
\end{equation}

	Moreover, let 
	\[
	C=\begin{bmatrix}p & 0 \\ 0 & q\end{bmatrix}
	\]
	be the projection associated with $p$ and $q$.
	\\
	Further, for a Hilbert space $H$ we denote by $P_{H}$ its associated projection.
	
	The following lemma characterises the $l^2$-solutions of $\Delta_1 x = 0$.
	
	\begin{lemma}
		\label{lem:SolDelta1}
		Let $G= \mathbb Z_m \ast \mathbb Z_n$ with $m \geq 2$ and $n \geq 3$. 
		Let $x\in \ell^2 (G)^{\oplus 2}$. Then 
		$\Delta_1 x =0$ 
		if and only if there are 
		$z\in \ell^2 (G)^{\oplus 2}$ and 
		$a\in \mathrm{im}(1-p)\cap\mathrm{im}(1-q)$
		such that 
		\[
		  x=(I - C)z  \qquad \text{and} \qquad		
		  \begin{bmatrix}1-s & 0 \\ 0 & {1-t}\end{bmatrix}z= \frac{1}{\sqrt 2}\begin{bmatrix}a\\ -a\end{bmatrix}. 
		\] 
	\end{lemma}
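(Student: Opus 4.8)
The plan is to analyze the equation $\Delta_1 x = 0$ directly using the explicit matrix form of $\Delta_1$ from Lemma~\ref{lem:Delta1}, together with the observation that the two summands $d_0 d_0^*$ and $d_1^* d_1$ are positive operators, so $\Delta_1 x = 0$ holds if and only if $d_0^* x = 0$ \emph{and} $d_1 x = 0$ simultaneously. This splits the problem into two cleaner conditions.

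First I would unpack $d_1 x = 0$. Since $d_1 = \mathrm{diag}\left(\sum_{0\le i<m} s^i,\ \sum_{0\le j<n} t^j\right) = \mathrm{diag}(mp,\ nq)$ with $p,q$ the averaging projections of \eqref{eq:pq}, the condition $d_1 x = 0$ says exactly that the first coordinate of $x$ lies in $\ker p = \mathrm{im}(1-p)$ and the second lies in $\ker q = \mathrm{im}(1-q)$; equivalently $x = (I-C)x$, i.e. $x = (I-C)z$ for some $z \in \ell^2(G)^{\oplus 2}$ (one may simply take $z = x$). Next I would unpack $d_0^* x = 0$. We have $d_0^* = \begin{bmatrix} 1-s^{-1} \\ 1-t^{-1}\end{bmatrix}$, so writing $x = (x_1, x_2)$, the condition is $(1-s^{-1})x_1 + (1-t^{-1})x_2 = 0$. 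Applying $s$ and $t$ respectively (these are unitaries, hence injective), an equivalent reformulation is obtained in terms of $(1-s)$ and $(1-t)$ acting on $x_1, x_2$; the claimed form in the lemma is the statement that $\begin{bmatrix}1-s & 0\\ 0 & 1-t\end{bmatrix} z = \frac{1}{\sqrt2}\begin{bmatrix} a \\ -a\end{bmatrix}$ for a suitable common element $a$. The element $a$ is essentially $\sqrt2\,(1-s)x_1 = -\sqrt2\,(1-t)x_2$ up to the unitary twist, and the point is that $a$ must lie in $\mathrm{im}(1-s) \cap \mathrm{im}(1-t)$; I would then argue that, after the substitution $x = (I-C)z$, the relevant membership can be upgraded to $a \in \mathrm{im}(1-p)\cap\mathrm{im}(1-q)$, using that $\mathrm{im}(1-s) \subseteq \mathrm{im}(1-p)$ is false in general but $\overline{\mathrm{im}(1-s)} = \ker p = \mathrm{im}(1-p)$ as closed subspaces of $\ell^2(G)$ since $1-p$ is the orthogonal projection onto $\overline{\mathrm{im}(1-s)}$ (the cyclic group $\langle s\rangle$ is finite, so $\mathrm{im}(1-s)$ is already closed). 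I would make this identification carefully in both directions to get the "if and only if".

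The converse direction is then a matter of reversing these steps: given $z$ and $a \in \mathrm{im}(1-p)\cap\mathrm{im}(1-q)$ satisfying the two displayed equations, set $x = (I-C)z$; one checks $d_1 x = 0$ because $(I-C)z$ lands in the kernel of $d_1$ by construction, and $d_0^* x = 0$ because the second equation forces $(1-s^{-1})x_1 = -(1-t^{-1})x_2$ after translating the twisted form back — here one uses that $C$ commutes with $\mathrm{diag}(1-s, 1-t)$ in the appropriate sense (since $p$ commutes with $s$ and $q$ with $t$), so applying $\mathrm{diag}(1-s,1-t)$ to $(I-C)z$ relates to applying it to $z$.

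I expect the main obstacle to be the bookkeeping around the $\frac{1}{\sqrt2}$ normalization and the unitary twist between $(1-s^{-1}, 1-t^{-1})$ and $(1-s, 1-t)$: one must verify that the element $a$ can genuinely be chosen to be the \emph{same} in both coordinates (with opposite signs) and that its membership is in the intersection of the \emph{projection} images $\mathrm{im}(1-p)\cap\mathrm{im}(1-q)$ rather than merely the unitary images, which is what makes the statement usable for the subsequent K-theory computation. Getting the two membership conditions to match up exactly — and checking that passing from $x$ to $z=x$ and back does not lose the constraint on $a$ — is the delicate point; everything else is linear algebra over $\mathbb{C}G$ combined with the finiteness of the cyclic subgroups.
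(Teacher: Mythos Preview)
Your approach via the Hodge identity $\ker\Delta_1=\ker d_0^*\cap\ker d_1$ is sound and in fact more direct than the paper's. The paper instead rewrites
\[
\Delta_1=\begin{bmatrix}m^2p&0\\0&n^2q\end{bmatrix}+\begin{bmatrix}1-s^{-1}&0\\0&1-t^{-1}\end{bmatrix}\begin{bmatrix}1&1\\1&1\end{bmatrix}\begin{bmatrix}1-s&0\\0&1-t\end{bmatrix},
\]
left-multiplies by $C$ to extract $Cx=0$, and then diagonalises the rank-one block $\begin{bmatrix}1&1\\1&1\end{bmatrix}$ via the unitary $U=\tfrac{1}{\sqrt2}\begin{bmatrix}1&1\\1&-1\end{bmatrix}$ to reduce the remaining equation to $A^*Az=0\Leftrightarrow Az=0$; this diagonalisation is the source of the $\tfrac{1}{\sqrt2}$ and the $(a,-a)$ pattern. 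Your positivity argument bypasses that linear-algebra manoeuvre entirely, which is a genuine simplification.

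There is, however, a slip in your expression for $d_0^*$. From the factorisation above (equivalently, from the off-diagonal entry $(1-s^{-1})(1-t)$ of $\Delta_1$ in Lemma~\ref{lem:Delta1}) one reads $d_0d_0^*=\begin{bmatrix}1-s^{-1}\\1-t^{-1}\end{bmatrix}\begin{bmatrix}1-s&1-t\end{bmatrix}$, so $d_0^*x=(1-s)x_1+(1-t)x_2$, not $(1-s^{-1})x_1+(1-t^{-1})x_2$. With the correct form, $d_0^*x=0$ is immediately $(1-s)x_1=-(1-t)x_2$; take $z=x$ and $a=\sqrt2\,(1-s)x_1$, and your observation $\mathrm{im}(1-s)=\ker p=\mathrm{im}(1-p)$ (closed, since $s$ has finite order) gives $a\in\mathrm{im}(1-p)\cap\mathrm{im}(1-q)$ at once. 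Your proposed patch, ``applying $s$ and $t$ respectively'', does not work: $(1-s^{-1})x_1+(1-t^{-1})x_2=0$ is a single identity in $\ell^2(G)$ and one cannot multiply its two summands by different group elements. Nor can a clever choice of $z$ help, since $(1-s)p=0$ forces $(1-s)z_1=(1-s)x_1$ for every $z$ with $(I-C)z=x$. Fix the sign of $d_0^*$ and your argument goes through cleanly.
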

	
	\begin{proof}
		The first Laplacian $\Delta_1$ explicitly described in \ref{lem:Delta1} can be rewritten as 
		\[
			\Delta_1 = 
			\begin{bmatrix}m^2p & 0\\0 & n^2q 
			\end{bmatrix}
			+
			\begin{bmatrix} {1-s^{-1}} & 0 \\ 0 & {1-t^{-1}}\end{bmatrix}\begin{bmatrix}1 & 1 \\ 1 & 1\end{bmatrix}
			\begin{bmatrix}{1-s} & 0 \\ 0 & {1-t}\end{bmatrix}.
		\]
		Suppose $x\in\ker \Delta_1$. By applying $C$ to $\Delta_1x=0$, we find that 
		\[
			\begin{bmatrix}
			m^2 & 0\\
			0 & n^2
			\end{bmatrix} Cx =
			\begin{bmatrix}
				m^2p & 0\\0 & n^2q 
		\end{bmatrix} x=0.
		\]
		It follows that $x\in \ker C=\mathrm{im}(1-C)$.
		Hence, there exists $z \in \ell^2(G) ^{\oplus 2}$ such that $x=(1-C)z$. 
		Considering $1-s=(1-p)(1-s)$ and $1-t=(1-q)(1-t)$, we have 
		\begin{equation}
			\label{eq2}
			\begin{bmatrix} {1-s^{-1}} & 0 \\ 0 & {1-t^{-1}}\end{bmatrix}\begin{bmatrix}1 & 1 \\ 1 & 1\end{bmatrix}\begin{bmatrix}{1-s} & 0 \\ 0 & 1-t \end{bmatrix}z=0.
		\end{equation}
		The unitary $U=\frac{1}{\sqrt 2}\begin{bmatrix} 1& 1\\ 1 & -1\end{bmatrix}$ satisfies  
		\[
		U\begin{bmatrix}2 & 0 \\ 0 & 0\end{bmatrix} U^*
		=
		\begin{bmatrix}1 & 1 \\ 1 & 1\end{bmatrix}.
		\]
		Thus the equation (\ref{eq2}) can be rewritten as 
		$A^*Az=0$, where 
		$
		A =
		\begin{bmatrix}\sqrt 2 & 0 \\ 0 & 0\end{bmatrix}U^*\begin{bmatrix}1-s & 0 \\ 0 & 1-t\end{bmatrix}
		$.
		Furthermore, $A^*Az=0$ is equivalent to 
		$Az=0$. 
		Hence, we only need to solve the following equation
		\[
		U^*
		\begin{bmatrix} {1-s} & 0 \\ 0 & {1-t} 
		\end{bmatrix}\begin{bmatrix} z_1 \\ z_2 \end{bmatrix}
		=
		\begin{bmatrix} 0 \\ a
		\end{bmatrix},
		\]
		for some $a \in l^2(G)$.
		Applying $U$ to the left hand side, we infer that $\Delta_1x=0$ implies that there are $z\in \ell^2(G)^{\oplus 2}$, $x=(I-C)z$ and $a\in\mathrm{im}(1-p)\cap\mathrm{im}(1-q)$ satisfying
		\[
			\begin{bmatrix}1-s & 0 \\ 0 & {1-t}\end{bmatrix}z
			=
			\frac{1}{\sqrt 2}\begin{bmatrix}a\\-a
			\end{bmatrix}. 
		\] 
		This proves the first part of the statement.
		The converse is evident through straightforward computation.
		\end{proof}

	For later reference we consider the following factorization of $1-p$ and $1-q$:
	\begin{equation}
		\label{eq3}
		1-p = \frac{1}{m}[(1-s)+\cdots+(1-s^{m-1})] =
		k(1-s)=(1-s)k,
	\end{equation}
	where $k:=\frac1{m}[1+(1+s)+\cdots+(1+s+\cdots+s^{m-2})]\in\mathbb C G$.
	Similarly, there exists $l:=\frac1{n}[1+(1+t)+\cdots+(1+t+\cdots+t^{n-2})]\in\mathbb C G$ such that 
	\begin{equation}
		\label{eq4}
		1-q = l(1-t)=(1-t)l.
	\end{equation}
		
	\begin{remark}\label{rem:identification}
	Consider the canonical inclusion 
	$\mathrm M_n(\Csr(G))\subset\mathcal{B}(\ell^2(G)^{\oplus n})$ induced by the left regular representation of the group. 
	From this perspective, every projection $r \in \mathrm M_n(\Csr(G))$ gives rise to an orthogonal projection $R \in \mathcal B(\ell^2(G)^{\oplus n})$.
	We identify $r$ with $R$ and  
	shall write $[r\ell^2(G)^{\oplus n}]$ or $[R\ell^2(G)^{\oplus n}]$ to denote the K-class of the projection from $\ell^2(G)^{\oplus n}$ to $r\ell^2(G)^{\oplus n}$. We identify then $[r\ell^2(G)^{\oplus n}]=[R\ell^2(G)^{\oplus n}]=[r]\in \mathrm K_0(A)$.
	\end{remark}
 
 	We proceed to find representatives for the K-theory class of the projection associated to $\ker \Delta_1$.	  
 	
	\begin{lemma}
		\label{lem:projker}
		Let $k$ and $l$ be as in (\ref{eq3}) and (\ref{eq4}). The $\ell^2$-kernel of $\Delta_1$ is   
		\[
		\ker\Delta_1
		=
		\left\{\begin{bmatrix}ka\\ -la\end{bmatrix} \mid  a\in \mathrm{im}(1-p)\cap\mathrm{im}(1-q)\right\}.
		\] 
		Moreover, the projection $P_{\ker \Delta_1}$ has the same $K$-class as the projection $P_{H_1}$ onto the Hilbert subspace 
		\begin{equation}
			\label{eq:H1}
			H_1:=\left\{\begin{bmatrix}a\\ -a\end{bmatrix} \mid  a\in \mathrm{im}(1-p)\cap\mathrm{im}(1-q)\right\}
			\subset \ell^2(G)\oplus \ell^2(G).
		\end{equation}
		In other words,  
		\[
			[P_{\ker\Delta_1}]=[P_{H_1}]\in \mathrm K_0(\Csr(G)).
		\]
	\end{lemma}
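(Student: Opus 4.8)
The plan is to first identify $\ker\Delta_1$ explicitly starting from Lemma~\ref{lem:SolDelta1}, and then to transport the K-class of $P_{\ker\Delta_1}$ onto that of $P_{H_1}$ by an explicit partial isometry manufactured from group-ring elements.

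For the first assertion, set $V:=\mathrm{im}(1-p)\cap\mathrm{im}(1-q)$ and note that $1-p$, $1-q$ are orthogonal projections, so that $\mathrm{im}(1-s)=\mathrm{im}(1-p)$, $\mathrm{im}(1-t)=\mathrm{im}(1-q)$ and $V$ is a closed subspace. If $x\in\ker\Delta_1$, Lemma~\ref{lem:SolDelta1} produces $z$ and $a\in V$ with $x=(I-C)z=\bigl((1-p)z_1,(1-q)z_2\bigr)$, $(1-s)z_1=\tfrac{1}{\sqrt2}a$ and $(1-t)z_2=-\tfrac{1}{\sqrt2}a$; applying the factorisations $1-p=k(1-s)$ and $1-q=l(1-t)$ from (\ref{eq3})--(\ref{eq4}) gives $x=\tfrac{1}{\sqrt2}(ka,-la)$, which is of the asserted form after replacing $a$ by $\tfrac{1}{\sqrt2}a\in V$. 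Conversely, given $a\in V$, choose $z_1,z_2$ with $(1-s)z_1=a$ and $(1-t)z_2=-a$ (possible since $a\in\mathrm{im}(1-s)\cap\mathrm{im}(1-t)$); then $(I-C)z=\bigl(k(1-s)z_1,\,l(1-t)z_2\bigr)=(ka,-la)$ and $\mathrm{diag}(1-s,1-t)\,z=\tfrac{1}{\sqrt2}(\sqrt2\,a,-\sqrt2\,a)$ with $\sqrt2\,a\in V$, so the converse implication of Lemma~\ref{lem:SolDelta1} shows $(ka,-la)\in\ker\Delta_1$. This establishes the displayed description of $\ker\Delta_1$.

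For the K-theoretic statement, introduce $N:=\mathrm{diag}(1-s,1-t)$ and $B:=\mathrm{diag}(k,l)$, both lying in $\mathrm M_2(\bC G)\subseteq\mathrm M_2(\Csr(G))$. Using $1-p=(1-s)k$, $1-q=(1-t)l$ and $a\in V$ one checks that $N$ sends $(ka,-la)$ to $(a,-a)$ while $B$ sends $(a,-a)$ to $(ka,-la)$; hence $N$ restricts to a bounded bijection $\ker\Delta_1\to H_1$ whose inverse is the bounded operator $B|_{H_1}$. In particular $H_1=N(\ker\Delta_1)$ is closed, and $N$ is injective on $\ker\Delta_1$, so the operator $x:=N\,P_{\ker\Delta_1}\in\mathrm M_2(\Csr(G))$ has closed range equal to $H_1$ and kernel $\ker P_{\ker\Delta_1}=(\ker\Delta_1)^{\perp}$, whence $\overline{\mathrm{im}(x^{*})}=\ker\Delta_1$. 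Since $x$ has closed range, its polar decomposition $x=w\,|x|$ has $w$ a partial isometry inside $\mathrm M_2(\Csr(G))$ --- explicitly $w=x\,h(x^{*}x)$ for a continuous $h$ agreeing with $t\mapsto t^{-1/2}$ on the part of $\sigma(x^{*}x)$ bounded away from $0$ --- and then $w^{*}w$ is the projection onto $\overline{\mathrm{im}(x^{*})}=\ker\Delta_1$, i.e. $w^{*}w=P_{\ker\Delta_1}$, while $ww^{*}$ is the projection onto $\overline{\mathrm{im}(x)}=H_1$, i.e. $ww^{*}=P_{H_1}$. This simultaneously shows that $P_{H_1}$ represents a class in $\mathrm K_0(\Csr(G))$ and yields the Murray--von Neumann equivalence $P_{\ker\Delta_1}=w^{*}w\sim ww^{*}=P_{H_1}$, so $[P_{\ker\Delta_1}]=[P_{H_1}]$ in $\mathrm K_0(\Csr(G))$.

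The main obstacle, and the reason the argument is not purely formal, is that $P_{H_1}$ is a priori only visibly a projection in the group von Neumann algebra (after the elementary rotation used in Lemma~\ref{lem:SolDelta1}, its nonzero coordinate is the orthogonal projection onto $\mathrm{im}(1-p)\cap\mathrm{im}(1-q)$), not in $\Csr(G)$. What rescues the situation is the spectral gap for $\Delta_1$: it puts $P_{\ker\Delta_1}$ in $\mathrm M_2(\Csr(G))$, and because $N$ is a group-ring element that becomes invertible once restricted to $\ker\Delta_1$, the product $x=N\,P_{\ker\Delta_1}$ has closed range, so its polar part stays inside the $C^{*}$-algebra. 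The one place where care is genuinely needed is the polar-decomposition bookkeeping that identifies $w^{*}w$ and $ww^{*}$ with $P_{\ker\Delta_1}$ and $P_{H_1}$; everything else reduces to direct computation with the explicit matrices $N$, $B$ and the relations (\ref{eq3})--(\ref{eq4}).
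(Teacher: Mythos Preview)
Your argument is correct. The identification of $\ker\Delta_1$ via Lemma~\ref{lem:SolDelta1} and the factorisations (\ref{eq3})--(\ref{eq4}) matches the paper exactly. For the K-theoretic part you take a different but equally valid route: you form $x=N\,P_{\ker\Delta_1}$, observe that it has closed range because $N|_{\ker\Delta_1}$ is bounded below with bounded inverse $B|_{H_1}$, and then extract the partial isometry from the polar decomposition inside $\mathrm M_2(\Csr(G))$ to obtain a Murray--von Neumann equivalence $P_{\ker\Delta_1}\sim P_{H_1}$ directly. The paper instead extends $N|_{\ker\Delta_1}$ to a global invertible $V=\mathrm{diag}\bigl(p+(1-s),\,q+(1-t)\bigr)\in\mathrm M_2(\Csr(G))$ by filling in the identity on the complementary summands $\mathrm{im}(p)$ and $\mathrm{im}(q)$, and then argues by similarity: $V(\ker\Delta_1)=H_1$ gives $[P_{\ker\Delta_1}]=[P_{H_1}]$. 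The paper's approach is slightly more elementary in that it avoids polar decomposition and the closed-range bookkeeping, at the cost of needing the small trick of completing $1-s$ and $1-t$ to invertibles; your approach has the advantage of producing an explicit Murray--von Neumann equivalence and of making transparent why $P_{H_1}$ lands in $\mathrm M_2(\Csr(G))$ in the first place, which the paper only addresses later via the decomposition in Lemma~\ref{lem:directsum}.
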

	
	\begin{proof}
		In view of (\ref{eq3}), (\ref{eq4}) and Lemma \ref{lem:SolDelta1}, we have that $\Delta_1 x=0$ if and only if $x$ satisfies  
		\[
			x
			=
			\begin{bmatrix}1-p & 0 \\ 0 & 1-q\end{bmatrix}z
			= 
			\begin{bmatrix}k & 0 \\ 0 & l\end{bmatrix}
			\begin{bmatrix}1-s & 0 \\ 0 & 1-t \end{bmatrix}
			z
			=
			\begin{bmatrix}k & 0 \\ 0 & l\end{bmatrix}
			\begin{bmatrix}\frac{1}{\sqrt 2}a \\-\frac{1}{\sqrt 2}a
			\end{bmatrix} 
		\]
		for $z=(I-C)x$ and some $a\in \ell^2(G)$ satisfying $a\in\mathrm{im}(1-p)\cap\mathrm{im}(1-q)$.
		In other words, we have
		\begin{align*}
			\ker\Delta_1
			=
			&
			\left\{ 	
				\begin{bmatrix}1-p & 0 \\ 0 & 1-q\end{bmatrix}z
				~~\mid~~  
				\begin{bmatrix}1-s & 0 
				\\ 
				0 & 1-t\end{bmatrix}z=\begin{bmatrix}\frac{1}{\sqrt 2}a \\-\frac{1}{\sqrt 2}a
			\end{bmatrix}, a\in\mathrm{im}(1-p)\cap\mathrm{im}(1-q)  	
		   \right\}
		   \\
			=
			&
			  \left\{ 
			  	\begin{bmatrix}\frac{ka}{\sqrt 2} \\ \frac{-la}{\sqrt 2}\end{bmatrix}
			 	 ~~\mid~~ 
			  	a\in\mathrm{im}(1-p)\cap\mathrm{im}(1-q)  
			  \right\}.
		\end{align*}
Due to the factorizations in (\ref{eq3}) and (\ref{eq4}), the operators $k$ and $1-s$ act as mutual inverses on the subspace $(1-p) \ell^2(G)$, just as $l$ and $1-t$ act as mutual inverses on $(1-q) \ell^2(G)$. To see this, note that $k(1-p)=(1-p)k$, and since $1-s=(1-p)(1-s)$, the space $(1-p)\ell^2(G)$ is invariant under the action by $1-s$. With respect to the orthogonal decomposition $\ell^2(G)=p \ell^2(G) \oplus(1-p) \ell^2(G)$, the operator $v_1:=p+(1-s)$ respects this block-diagonal structure. It acts as the identity on $p \ell^2(G)$ and as $1-s$ on $(1-p) \ell^2(G)$. Therefore, $v_1$ is invertible with its inverse given by $v_1^{-1}:= p + k(1-p)$. 
Indeed, a direct calculation verifies this (noting that $sp = p$):
\begin{align*}
[p+(1-s)][p+k(1-p)] &= p^2 + pk(1-p) + (1-s)p + (1-s)k(1-p) \\
&= p + 0 + (p-sp) + (1-p) \\
&= p + 0 + 0 + (1-p) \\
&= 1.
\end{align*}

Consider the invertible operator $V:=\begin{bmatrix} v_1 & 0 \\ 0 & v_2 \end{bmatrix}$ in $M_2(\Csr(G))$. Let us compute the image of $\ker \Delta_1$ under $V$. For $a \in \mathrm{im}(1-p)$, we have $pa = 0$. This implies that $pka = pk(1-p)a = 0$. Consequently, the action of $v_1$ on $ka$ yields:
\[
v_1(ka) = (p+1-s)ka = pka + (1-s)ka = 0 + (1-p)a = a.
\]
Applying the same logic to the second coordinate, we find:
\begin{align*}
V(\ker \Delta_1) &= \left\{ \begin{bmatrix} v_1 ka \\ -v_2 la \end{bmatrix} \;\middle|\; a \in \mathrm{im}(1-p) \cap \mathrm{im}(1-q) \right\} \\
&= \left\{ \begin{bmatrix} a \\ -a \end{bmatrix} \;\middle|\; a \in \mathrm{im}(1-p) \cap \mathrm{im}(1-q) \right\} = H_1.
\end{align*}

To conclude the statement in K-theory, consider the element $e := V^{-1} P_{H_1} V \in M_2(\Csr(G))$. Since $P_{H_1}$ is an orthogonal projection onto $H_1$, $e$ is an idempotent operator whose image is precisely $V^{-1}(H_1) = \ker \Delta_1$. In a $\Cs$-algebra, any idempotent defines the same $\mathrm K_0$-class as the orthogonal projection onto its image, so $[P_{\ker \Delta_1}] = [e] \in \mathrm K_0(\Csr(G))$. Furthermore, since $e$ and $P_{H_1}$ are conjugate idempotents, they share the same K-theory class. Thus, we obtain:
\[
[P_{\ker \Delta_1}] = [e] = [V^{-1} P_{H_1} V] = [P_{H_1}].
\]
This finishes the proof.
	\end{proof}
	
	Before presenting our proof for Theorem \ref{thm: Z_n*Z_m} we need two more technical lemmas. 
	Recall the Hilbert space $H_1$ defined in (\ref{eq:H1}). We consider four more Hilbert subspaces of $\ell^2(G)\oplus \ell^2(G)$.
	\begin{align} \label{eq7}
		& H_2:=\left\{\begin{bmatrix}c\\ -c\end{bmatrix} \mid  c\in \mathrm{im}p+\mathrm{im}q \right\}  \quad
		\tilde H_2:=\left\{\begin{bmatrix}pw\\ qv\end{bmatrix} \mid  v,w\in \ell^2(G) \right\}
		\\
		& \quad H_3:=\left\{\begin{bmatrix}b \\ b \end{bmatrix} \mid  b\in \ell^2(G) \right\}   
		\quad \tilde H_3:=\left\{\begin{bmatrix}(1-p)u \\ (1-q)u \end{bmatrix} \mid  u \in \ell^2(G) \right\}
	\end{align}
	The next lemma provides two decompositions of $\ell^2(G)\oplus \ell^2(G)$.
	
	\begin{lemma}
		\label{lem:directsum}
		There are orthogonal direct sums of Hilbert spaces
		\[
			\ell^2(G)\oplus \ell^2(G) = H_1\oplus H_2\oplus H_3\quad \text{and} \quad
			\ell^2(G)\oplus \ell^2(G) = H_1\oplus \tilde H_2\oplus \tilde H_3.
		\]
	\end{lemma}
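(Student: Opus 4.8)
The plan is to reduce everything to one genuinely analytic input, namely that the positive operator $R:=(1-p)+(1-q)=2-p-q\in\Csr(G)$ is invertible; granting this, both decompositions become routine bookkeeping with the orthogonal projections $p,q$ and the identities $p=p^{*}=p^2$, $q=q^{*}=q^2$, $\mathrm{im}(1-p)=\ker p$, $\mathrm{im}(1-q)=\ker q$.

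To see that $R$ is invertible, note first that $\langle R\xi,\xi\rangle=\|(1-p)\xi\|^2+\|(1-q)\xi\|^2$ for every $\xi\in\ell^2(G)$. Since $sp=p$ we have $(1-s)p=0$, so $1-s=(1-s)(1-p)$ and $\|(1-s)\xi\|\leq 2\|(1-p)\xi\|$; similarly $\|(1-t)\xi\|\leq 2\|(1-q)\xi\|$. Hence $\langle R\xi,\xi\rangle\geq\tfrac14\big(\|(1-s)\xi\|^2+\|(1-t)\xi\|^2\big)$, and because $\{s,t\}$ generates the non-amenable group $G$, the left regular representation has no almost invariant vectors, so the right-hand side is at least $\kappa\|\xi\|^2$ for some $\kappa>0$. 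Thus $R\geq\kappa>0$ and $R^{-1}\in\Csr(G)$. (One could equally invoke the spectral gap of $\Delta_0$, available as in the proof of Lemma~\ref{lem:existence}, together with $\Delta_0\leq 4R$.) Three consequences will be used: (i) $\mathrm{im}\,p\cap\mathrm{im}\,q=\ker R=\{0\}$; (ii) testing the bound on unit vectors of $\mathrm{im}\,p$ gives $\|qp\|\leq\sqrt{1-\kappa}=:\mu<1$, hence for $x\in\mathrm{im}\,p$, $y\in\mathrm{im}\,q$ one has $|\langle x,y\rangle|=|\langle qpx,y\rangle|\leq\mu\|x\|\|y\|$, so $\|x+y\|^2\geq(1-\mu)(\|x\|^2+\|y\|^2)$, which makes $(x,y)\mapsto x+y$ bounded below on $\tilde H_2=\mathrm{im}\,p\times\mathrm{im}\,q$; therefore its range $\mathrm{im}\,p+\mathrm{im}\,q$ is closed, and consequently equals $\big(\mathrm{im}(1-p)\cap\mathrm{im}(1-q)\big)^{\perp}$; (iii) the map $u\mapsto((1-p)u,(1-q)u)$ has squared norm $\langle Ru,u\rangle\geq\kappa\|u\|^2$, so it is bounded below and $\tilde H_3$ is closed. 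The remaining spaces $H_1,H_2,H_3,\tilde H_2$ are closed for obvious reasons (using (ii) for $H_2$).

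Pairwise orthogonality is then a one-line check in each case. In the first decomposition $H_3$ is the diagonal, orthogonal to the anti-diagonal and hence to $H_1$ and $H_2$; and for $(a,-a)\in H_1$ and $(c,-c)\in H_2$, writing $c=c_1+c_2$ with $c_1\in\mathrm{im}\,p$ and $c_2\in\mathrm{im}\,q$ and using $a\in\ker p\cap\ker q$ gives $\langle a,c\rangle=0$, so $H_1\perp H_2$. In the second decomposition, with $(a,-a)\in H_1$, $(pw,qv)\in\tilde H_2$, $((1-p)u,(1-q)u)\in\tilde H_3$, one computes $\langle(a,-a),(pw,qv)\rangle=\langle pa,w\rangle-\langle qa,v\rangle=0$, $\langle(a,-a),((1-p)u,(1-q)u)\rangle=\langle a,u\rangle-\langle a,u\rangle=0$ (as $(1-p)a=(1-q)a=a$), and $\langle(pw,qv),((1-p)u,(1-q)u)\rangle=\langle(1-p)pw,u\rangle+\langle(1-q)qv,u\rangle=0$.

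Finally, spanning. For the first decomposition write $(x,y)=\tfrac12(x+y,x+y)+\tfrac12(x-y,-(x-y))$; the first term lies in $H_3$, and decomposing $\tfrac12(x-y)=a+c$ orthogonally with $a\in\mathrm{im}(1-p)\cap\mathrm{im}(1-q)$ and $c\in\mathrm{im}\,p+\mathrm{im}\,q$ (legitimate by (ii)) puts the second term in $H_1\oplus H_2$. For the second decomposition, given $(x,y)$ put $u:=R^{-1}\big((1-p)x+(1-q)y\big)$ and $a:=(1-p)(x-u)\in\mathrm{im}(1-p)$; using $(1-p)u+(1-q)u=Ru=(1-p)x+(1-q)y$ one gets $-a+(1-q)u=(1-q)y$, which also shows $a=(1-q)(u-y)\in\mathrm{im}(1-q)$, so $a\in\mathrm{im}(1-p)\cap\mathrm{im}(1-q)$ and $(a,-a)\in H_1$; then $(x,y)=(a,-a)+(px,qy)+((1-p)u,(1-q)u)$, which one checks coordinatewise (first coordinate $a+px+(1-p)u=(1-p)x+px=x$, and similarly for the second). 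The only step needing real work is the invertibility of $R$ — equivalently, the positivity of the Friedrichs angle between $\mathrm{im}\,p$ and $\mathrm{im}\,q$, equivalently the closedness of $\mathrm{im}\,p+\mathrm{im}\,q$; I expect this to be the crux and would dispatch it first, via the non-amenability of $G$ exactly as above, everything else being formal.
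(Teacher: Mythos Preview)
Your proof is correct, and it is more careful than the paper's on precisely the point you identify as the crux. The paper asserts
\[
(\mathrm{im}(1-p)\cap\mathrm{im}(1-q))^{\perp}=\mathrm{im}\,p+\mathrm{im}\,q
\quad\text{and}\quad
\mathrm{im}\,p\cap\mathrm{im}\,q=\{0\}
\]
without justification, and never checks that $H_2$ or $\tilde H_3$ is closed; you ground all of this in the invertibility of $R=2-p-q$, derived from non-amenability (equivalently, the spectral gap of $\Delta_0$), which is the right way to close the gap.

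The routes to the second decomposition differ in structure. The paper first establishes the \emph{non-orthogonal} direct sum $\ell^2(G)^{\oplus 2}=H_1\oplus\tilde H_2\oplus H_3$ by a spanning computation, and then rewrites an arbitrary element
\[
\begin{bmatrix}a\\-a\end{bmatrix}+\begin{bmatrix}pw\\qv\end{bmatrix}+\begin{bmatrix}b\\b\end{bmatrix}
=\begin{bmatrix}a\\-a\end{bmatrix}+\begin{bmatrix}p(w+b)\\q(v+b)\end{bmatrix}+\begin{bmatrix}(1-p)b\\(1-q)b\end{bmatrix}
\]
to pass from $H_3$ to $\tilde H_3$. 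You instead produce the $\tilde H_3$-component directly via the explicit formula $u=R^{-1}\big((1-p)x+(1-q)y\big)$, which is cleaner and makes transparent why invertibility of $R$ is exactly what is needed. The paper's detour through $H_1\oplus\tilde H_2\oplus H_3$ avoids inverting $R$ explicitly but still relies tacitly on the same analytic facts (trivial intersection of $\mathrm{im}\,p$ and $\mathrm{im}\,q$, closedness of the relevant subspaces). Your approach buys rigor and a one-line spanning argument; the paper's buys an argument that reads as purely algebraic, at the cost of leaving the analytic input unstated.
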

	
	\begin{proof}
		The identity $$(\mathrm{im}(1-p)\cap\mathrm{im}(1-q))^{\perp}=\mathrm{im}(1-p)^{\perp}+\mathrm{im}(1-q)^{\perp}=\mathrm{im}(p)+\mathrm{im}(q),$$
		implies that 
		\begin{equation}
			\label{eq:yeah}
			\ell^2(G)=(\mathrm{im}(1-p)\cap\mathrm{im}(1-q))+\mathrm{im}(p)+\mathrm{im}(q).
		\end{equation}
		Applying this, it is then straightforward to verify that $\ell^2(G)\oplus \ell^2(G)=H_1\oplus H_2\oplus H_3$.
		
		Now we turn to the second decomposition. It is easy to check that $H_1$ and $\tilde H_2$, $H_1$ and $H_3$ are orthogonal.
		Using $\mathrm{im}(p)\cap\mathrm{im}(q)=\{0\}$, we infer that $\tilde H_2\cap H_3=\{0\}$.
		Next we show that their sum is $\ell^2(G)\oplus \ell^2(G)$.
		From (\ref{eq:yeah}) it follows that 
		\begin{equation}
			\label{eq:H}
			\ell^2(G)=\{2a+pw-qv \mid a\in\mathrm{im}(1-p)\cap\mathrm{im}(1-q), w,v\in \ell^2(G)\}.
		\end{equation}
		We have then the following equality of Hilbert spaces:
		\begin{align}
			\label{eq:H123}
			H_1\oplus \tilde H_2\oplus H_3=&\left\{ 
			\begin{bmatrix} a+pw+b \\ -a+qv+b\end{bmatrix} \mid w,v,b\in \ell^2(G), a\in  \mathrm{im}(1-p)\cap\mathrm{im}(1-q)\right\} \\
			=& \left\{ 
			\begin{bmatrix} a+pw+b \\ a+pw+b-(2a+pw-qv) \end{bmatrix} \mid w,v,b\in \ell^2(G), a\in  \mathrm{im}(1-p)\cap\mathrm{im}(1-q)
			\right\}\\
			=&\left\{\begin{bmatrix}x\\ x-y\end{bmatrix} \mid x,y\in \ell^2(G) \right\} \\
			\cong & \ell^2(G)\oplus \ell^2(G) \label{eq5}.
		\end{align}
		Let us briefly explain how we obtain these. As $b\in \ell^2(G)$ is arbitrary, it implies that the first coordinate $a+pw+b$ of $H_1+H_2+H_3\subseteq \ell^2(G)\oplus \ell^2(G)$ can span the entire  $\ell^2(G)$. Additionally, by (\ref{eq:H}) the difference $2a+pw-qv$ of the two coordinates of $H_1+H_2+H_3\subseteq \ell^2(G)\oplus \ell^2(G)$ can also cover all of $\ell^2(G)$. Thus the third equality holds.
		We replace $H_3$ in $H_1+ \tilde H_2+H_3$ by 
		\[
		\tilde H_3:=\left\{\begin{bmatrix}(1-p)b \\ (1-q)b \end{bmatrix} \mid  b\in \ell^2(G) \right\}
		\]
		to obtain an orthogonal direct sum. It is straightforward to check that $H_1, \tilde H_2, \tilde H_3$ are mutually orthogonal subspaces of $\ell^2(G)\oplus \ell^2(G)$ and that their direct sum $H_1\oplus \tilde H_2\oplus\tilde H_3$ is a subspace of $\ell^2(G)\oplus \ell^2(G)$.
		By (\ref{eq:H123})-(\ref{eq4}) every element of $\ell^2(G)\oplus \ell^2(G)$ can be represented by 
		\[
		\begin{bmatrix}a\\ -a\end{bmatrix}+\begin{bmatrix}pw\\ qv\end{bmatrix}+\begin{bmatrix}b\\ b\end{bmatrix},
		\]
		for some $w,v,b\in \ell^2(G)$ and some $a\in\mathrm{im}(1-p)\cap\mathrm{im}(1-q)$. The latter is equal to 
		\[
		\begin{bmatrix}a\\ -a\end{bmatrix}+\begin{bmatrix}p(w+b)\\ q(v+b)\end{bmatrix}+\begin{bmatrix}(1-p)b\\ (1-q)b\end{bmatrix}\in H_1\oplus H_2\oplus \tilde H_3.
		\]
		Hence $\ell^2(G)\oplus \ell^2(G)\subseteq H_1\oplus \tilde H_2\oplus \tilde H_3$.
		This finishes the proof.
	\end{proof}

Let us remark that $P_{H_3}$ and $P_{\tilde H_3}$ belong to $\mathrm M_2(\Csr(G))$. 
First observe that \[P_{ H_3}=\begin{bmatrix}\frac12 & \frac12 \\ \frac12 & \frac12 \end{bmatrix}\in \mathrm M_2(\Csr(G)).\]
Recall that by our assumption $\Delta_1$ has a spectral gap $(0, c]$ for some $c\in \mathbb R_{+}$.
Choose a continuous function on $\mathbb R$ such that $f$ vanishes when $x\le0$ and equals $1$ when $x>\frac{c}{2}$.
Then $P_{\ker\Delta_1}=f(\Delta_1)\in \mathrm M_2(\Csr(G))$. 
The invertible matrix $V\in \mathrm M_2(\Csr(G))$ 
from the proof of Lemma~\ref{lem:projker} conjugates  $P_{H_1}$ to $P_{\ker\Delta_1}$. 
Hence $P_{H_1}\in \mathrm M_2(\Csr(G))$. Consider $P_{\tilde H_2}=\begin{bmatrix}p & 0 \\0 & q\end{bmatrix}\in \mathrm M_2(\Csr(G))$. 
By Lemma \ref{lem:directsum} then
we have 
\[P_{\tilde H_3}=1-P_{\tilde H_2}-P_{H_1} \in \mathrm M_2(\Csr(G)).
\]
The next lemma identifies the K-classes of $P_{H_3}$ and $P_{\tilde H_3}$ in $K$-theory.	
	\begin{lemma}
		\label{lem:H3}
		Let the Hilbert spaces $H_3$ and $\tilde{H}_3$ be as in (\ref{eq7}). The projections associated with them satisfy $$[P_{H_3}]=[P_{\tilde H_3}]\in \mathrm K_0(\Csr(G)).$$
	\end{lemma}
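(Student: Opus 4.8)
The plan is to exhibit an explicit Hilbert-space isomorphism (or, at the level of $K$-theory, a Murray–von Neumann equivalence witnessed by an element of $\mathrm M_2(\Csr(G))$) between $H_3$ and $\tilde H_3$. The map sending $\begin{bmatrix}b\\ b\end{bmatrix}\in H_3$ to $\begin{bmatrix}(1-p)b\\ (1-q)b\end{bmatrix}\in\tilde H_3$ is the natural candidate: it is visibly the restriction of the operator $W:=\begin{bmatrix}1-p & 0\\ 0 & 1-q\end{bmatrix}$ composed with the identification $b\leftrightarrow\begin{bmatrix}b\\b\end{bmatrix}$. So the first step is to package this as a partial isometry. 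Concretely, consider $T:=P_{\tilde H_3}\,W\,P_{H_3}$ (or the even simpler $T:=\begin{bmatrix}1-p\\ 1-q\end{bmatrix}\circ(\text{averaging }\begin{bmatrix}b\\ b\end{bmatrix}\mapsto b)$, viewed inside $\mathrm M_2(\Csr(G))$ after noting both projections lie there, as established in the paragraph preceding the lemma). One checks $T^*T$ and $TT^*$ are the relevant range projections up to invertible positive factors; since $K$-theory only sees $[P]$ through $[e]$ for any idempotent $e$ similar to $P$, it suffices that $T^*T$ be invertible on $H_3$ and $TT^*$ be invertible on $\tilde H_3$.

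The key computation is therefore: show the map $H_3\to\tilde H_3$, $\begin{bmatrix}b\\ b\end{bmatrix}\mapsto\begin{bmatrix}(1-p)b\\ (1-q)b\end{bmatrix}$, is bounded below. Boundedness is clear since $1-p,1-q$ are projections. For boundedness below one estimates $\|(1-p)b\|^2+\|(1-q)b\|^2$ from below by a multiple of $\|b\|^2+\|b\|^2=2\|b\|^2$; this fails pointwise only if $b\in\ker(1-p)\cap\ker(1-q)=\mathrm{im}(p)\cap\mathrm{im}(q)=\{0\}$, using the same fact $\mathrm{im}(p)\cap\mathrm{im}(q)=\{0\}$ invoked in Lemma~\ref{lem:directsum}. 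But pointwise injectivity is not enough for a bounded inverse, so the honest route is: the operator $(1-p)\oplus(1-q)$ restricted appropriately has closed range because $p$ and $q$ are the averaging projections over finite groups $\mathbb Z_m,\mathbb Z_n$ sitting inside $G$ as a free product — the relevant angle between $\mathrm{im}(p)$ and $\mathrm{im}(q)$ in $\ell^2(G)$ is bounded away from $0$ (this is essentially the spectral gap of $\Delta_1$, already assumed, or can be seen directly from the free-product structure). Alternatively, and more cleanly, one avoids all analysis by using Lemma~\ref{lem:directsum}: both $H_1\oplus\tilde H_2\oplus H_3$ and $H_1\oplus\tilde H_2\oplus\tilde H_3$ equal $\ell^2(G)\oplus\ell^2(G)$, hence $P_{H_3}=1-P_{H_1}-P_{\tilde H_2}=P_{\tilde H_3}$ as operators — wait, that would give equality, which is too strong; the point is rather that $H_3$ and $\tilde H_3$ are two complements of the same subspace $H_1\oplus\tilde H_2$, and any two closed complements of a fixed closed subspace are similar via an invertible operator (the identity composed with the two projections), which in the $\Cs$-algebra setting gives $[P_{H_3}]=[P_{\tilde H_3}]$ in $K$-theory.

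I expect this last observation to be the cleanest proof: once Lemma~\ref{lem:directsum} is in hand, $P_{H_3}$ and $P_{\tilde H_3}$ are the complementary projections to $P_{H_1}+P_{\tilde H_2}$ with respect to the two direct-sum decompositions, so the invertible operator $g:=P_{\tilde H_3}+P_{H_1}+P_{\tilde H_2}$ (restricted suitably) or simply the standard fact that $1-e$ and $1-f$ are Murray–von Neumann equivalent whenever $e,f$ are equivalent idempotents — applied to $e=f=P_{H_1}+P_{\tilde H_2}$, which forces us instead to compare the two decompositions directly — yields the result. The main obstacle is purely bookkeeping: verifying that the comparison operator genuinely lies in $\mathrm M_2(\Csr(G))$ (which the preceding paragraph has already arranged, since $P_{H_1},P_{\tilde H_2},P_{\tilde H_3}\in\mathrm M_2(\Csr(G))$) and that it implements a similarity rather than a mere bounded bijection of Hilbert spaces. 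So the steps, in order: (1) record from Lemma~\ref{lem:directsum} that $P_{H_3}$ and $P_{\tilde H_3}$ are the two complements of $Q:=P_{H_1}+P_{\tilde H_2}$; (2) form the invertible element $V:=P_{\tilde H_3}P_{H_3}+Q\in\mathrm M_2(\Csr(G))$ and check $V$ is invertible (its "inverse direction" being built from $P_{H_3}P_{\tilde H_3}+Q$) using $\mathrm{im}(1-Q)\cap\ker(1-Q)$ considerations; (3) verify $V P_{H_3}V^{-1}=P_{\tilde H_3}$, whence $[P_{H_3}]=[P_{\tilde H_3}]\in\mathrm K_0(\Csr(G))$.
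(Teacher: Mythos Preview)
Your instinct to use the map $v\colon\begin{bmatrix}b\\b\end{bmatrix}\mapsto\begin{bmatrix}(1-p)b\\(1-q)b\end{bmatrix}$ is exactly right --- this is the map the paper uses. The problem is in your final packaging. With $Q=P_{H_1}+P_{\tilde H_2}=1-P_{\tilde H_3}$, your operator $V=P_{\tilde H_3}P_{H_3}+Q$ acts as the \emph{identity} on $H_3$: for $x\in H_3$ one has $Vx=P_{\tilde H_3}x+Qx=P_{\tilde H_3}x+(1-P_{\tilde H_3})x=x$. So $V(H_3)=H_3$, not $\tilde H_3$, and step~(3) fails. The underlying issue is that you are mixing the orthogonal projection $P_{H_3}$ with the non-orthogonal decomposition $H_1\oplus\tilde H_2\oplus H_3$; the ``complement'' argument would need the \emph{oblique} idempotent onto $H_3$ along $H_1\oplus\tilde H_2$, and you have not shown that idempotent lies in $\mathrm M_2(\Csr(G))$.

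The paper repairs this by using \emph{both} orthogonal decompositions of Lemma~\ref{lem:directsum} simultaneously. Alongside your $v\colon H_3\to\tilde H_3$ it builds a companion bijection $u\colon H_2\to\tilde H_2$, $\begin{bmatrix}pw\\qv\end{bmatrix}\mapsto\begin{bmatrix}pw+qv\\-pw-qv\end{bmatrix}$ (injective because $\mathrm{im}(p)\cap\mathrm{im}(q)=0$), and sets $U=\mathrm{id}_{H_1}\oplus u\oplus v\colon H_1\oplus H_2\oplus H_3\to H_1\oplus\tilde H_2\oplus\tilde H_3$. Because $U$ carries one \emph{orthogonal} splitting to the other, $U^{-1}P_{\tilde H_3}U$ is genuinely the orthogonal projection $P_{H_3}$, not merely an idempotent with range $H_3$. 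Membership of $U$ in $\mathrm M_2(\Csr(G))$ comes from the explicit formula
\[
U=P_{H_1}+P_{\tilde H_2}\begin{bmatrix}1&1\\-1&-1\end{bmatrix}P_{H_2}+P_{\tilde H_3}\begin{bmatrix}1-p&0\\0&1-q\end{bmatrix}P_{H_3},
\]
and $U^{-1}\in\mathrm M_2(\Csr(G))$ by inverse-closedness of $\Cs$-algebras. Your bounded-below analysis of $v$ becomes unnecessary once $U$ is seen to be a bijection of Hilbert spaces (open mapping theorem) with inverse in the algebra.
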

	\begin{proof}
		First note that by Lemma \ref{lem:directsum}, the projection $P_{H_2}=1-(P_{H_1}+P_{H_3})$ belongs to $\mathrm M_2(\Csr(G))$. So by previous arguments we have that $P_{H_i}, P_{\tilde H_j}\in \mathrm M_2(\Csr(G))$ for $i=1,2,3$ and $j=1,2$.  
		To show that the projections associated with $H_3$ and $\tilde{H}_3$ have the same K-class, we construct an invertible operator in $\mathrm M_2(\Csr(G))$. Consider the following bijective continuous linear maps associated with $H_2$ and $\tilde{H}_2$ as well as $H_3$ and $\tilde{H_3}$.
		\begin{align*}
		u: H_2&\rightarrow \tilde H_2 \quad \begin{bmatrix}pw\\ qv\end{bmatrix}\mapsto \begin{bmatrix}pw+qv\\-pw-qv\end{bmatrix}
		\\
		v: H_3&\rightarrow \tilde H_3 \quad \begin{bmatrix}x \\ x\end{bmatrix}\mapsto \begin{bmatrix}(1-p)x\\(1-q)x\end{bmatrix}
		\end{align*}
	    Note that injectivity is due to the fact that $\mathrm{im}(p)\cap\mathrm{im}(q)=\{0\}$.
	    Together with the fact that $H_i, \tilde H_i$ are Hilbert spaces, the algebraic inverse of $u,v$ are also bounded. 
	    Employing these operators we construct an invertible operator $U$ in 
	    $\mathcal{B}(\ell^2(G)^{\oplus 2})$.
	    \[
	    	U=\begin{bmatrix}1 & 0 & 0\\ 0 & u & 0 \\ 0 & 0 & v\end{bmatrix}: H_1\oplus H_2\oplus H_3\rightarrow H_1\oplus\tilde H_2\oplus\tilde H_3.
	    \]
The operator $U$ belongs to $M_2(\Csr(G))$ since we can view it as
\[
U = P_{H_1} + P_{\tilde{H}_2} \begin{bmatrix} 1 & 1 \\ -1 & -1 \end{bmatrix} P_{H_2} + P_{\tilde{H}_3} \begin{bmatrix} 1-p & 0 \\ 0 & 1-q \end{bmatrix} P_{H_3}.
\]
Since $M_2(\Csr(G))$ is a $C^*$-algebra, it is closed under holomorphic functional calculus, which implies it is inverse closed. Hence, the inverse $U^{-1}$ of $U$ belongs to $M_2(\Csr(G))$ as well. 

To conclude the equality of K-theory classes, consider the element $e := U^{-1} P_{\tilde{H}_3} U \in M_2(\Csr(G))$. Since $U$ is not unitary, $e$ is generally not an orthogonal projection, but it is an idempotent. Because $U$ maps $H_3$ bijectively onto $\tilde{H}_3$, the image of the idempotent $e$ is precisely $U^{-1}(\tilde{H}_3) = H_3$. In a $\Cs$-algebra, any idempotent defines the same $\mathrm K_0$-class as the orthogonal projection onto its image, which gives $[e] = [P_{H_3}]$. Furthermore, since $e$ and $P_{\tilde{H}_3}$ are conjugate idempotents, they define the same K-theory class. Therefore, we obtain:
\[
[P_{\tilde{H}_3}] = [e] = [P_{H_3}] \in \mathrm K_0(\Csr(G)).
\]        
	\end{proof}
	
	Now we are ready to present our proof for Theorem \ref{thm: Z_n*Z_m}.
	
	\begin{proof}
		[Proof of Theorem~\ref{thm: Z_n*Z_m}]
		 We know by Lemma \ref{lem:existence} that $p_1$ exists. 
		Lemma~\ref{lem:projker} implies that 
		$[p_1]\in \mathrm K_0(\Csr(G))$ is represented by the projection from $\ell^2(G) ^{\oplus 2}$ onto $H_1$. 
		In other words, we have \begin{equation*}
			[p_1]=[P_{H_1}]\in \mathrm K_0(\Csr(G)).
		\end{equation*}		
		 We infer from Lemma~\ref{lem:directsum} that
		\[
			[H_1]=[H\oplus H]-[\tilde H_3]-[pH]-[qH],
		\]
		which is by Remark \ref{rem:identification} equal to 
		\begin{equation*}
			[P_{H_1}]=2[1]-[P_{\tilde H_3}]-[p]-[q]\in \mathrm K_0(\Csr(G)).
		\end{equation*}
		Note that $[P_{\tilde H_3}]=[1]$. 
		Putting all these together we conclude that
		\[
			[p_1]=2[1]-[1]-[p]-[q]=[1]-[p]-[q]
			\in 
			\mathrm K_0(\Csr(G)).
		\]
		More concretely, we have 
		\[
		[p_1] = [1] - \left[\frac{1}{m}
		\sum_{0 \leq i < m}{s^i}\right] - 
		\left[\frac{1}{n}
		\sum_{0 \leq j <n}{t^j}\right].
		\]		
	\end{proof}

	When $m=2$ and $n=3$ the above proof can be slightly modified so that $\ker\Delta_1$ can be expressed more concretely. We discuss this in the following example.
	
	\begin{example}
		\label{example:PSL(2,Z)}
		Consider the group $\mathrm{PSL}(2, \mathbb Z) = \mathbb Z_2 \ast \mathbb Z_3$. The K-class of the higher Kazhdan projection $p_1$ can be described as
		\[
		[p_1] = [1]-\left[\frac{1+s}{2}\right]-\left[\frac{1+t+t^2}{3}\right]\in \mathrm K_0(
		\Csr(\mathrm{PSL}(2, \mathbb Z))).
		\]
	\end{example}
	
	\begin{proof}
		Let $s, t$ be generators of order 2 and 3, respectively. 
		By Lemma \ref{lem:Delta1}, the Laplacian $\Delta_1$ can be expressed as
		\[ 
		\Delta_1=d_0d_0^*+d_1^*d_1=
		\begin{bmatrix}4 & (1-s)(1-t^2)\\
			(1-t)(1-s) & 5+2t+2t^2
		\end{bmatrix}.
		\]
		As before set $p=\frac{1+s}{2}$ and $q=\frac{1+t+t^2}{3}$. 
		Similar to (\ref {eq3}) and (\ref{eq4}), we write
		\[
		1-p=\frac{1-s}{2}\frac{1-s}{2}, 
		\quad
		1-q=\frac{1-t}{\sqrt 3}\frac{1-t^2}{\sqrt 3}.
		\]
		View $\Delta_1$ as
		\[
		\Delta_1
		=
		\begin{bmatrix}4p & 0\\0 & 9q \end{bmatrix}
		+
		\begin{bmatrix} 
			\frac{1-s}{2} & 0 \\ 0 & \frac{1-t}{\sqrt 3}
		\end{bmatrix}
		\begin{bmatrix}
			4 & 2\sqrt 3 \\ 2\sqrt 3 & 3
		\end{bmatrix}
		\begin{bmatrix}\frac{1-s}{2} & 0 \\ 0 & \frac{1-t^2}{\sqrt 3}
		\end{bmatrix}.
		\]
		Following the argument as in Lemma \ref{lem:SolDelta1} and Lemma \ref{lem:projker} we have 
		\begin{align*}
			\ker\Delta_1=&\left\{
			\begin{bmatrix}1-p & 0\\ 0& 1-q\end{bmatrix}z \mid \begin{bmatrix}\frac{1-s}{2} & 0\\ 0& \frac{1-t}{\sqrt 3}\end{bmatrix}z=\begin{bmatrix}-\frac{\sqrt 3}{\sqrt 7}a\\ \frac{2}{\sqrt 7}a\end{bmatrix}
			a\in \mathrm{im}(1-p)\cap\mathrm{im}(1-q)
			\right\}\\
			=&
			\left\{
			\begin{bmatrix}\frac{1-s}{2}(-\frac{\sqrt 3}{\sqrt 7}a) \\ \frac{1-t^2}{\sqrt 3}\frac{2}{\sqrt 7}a \end{bmatrix} \mid 
			a\in\mathrm{im}(1-p)\cap\mathrm{im}(1-q) 
			\right\}.
		\end{align*}
	We infer that
		$[P_{\ker\Delta_1}]$ has the same  $K$-class as  the projection onto 
		\[
		H_1 =  \left\{
		\begin{bmatrix}(1-p)a\\ -(1-q)a\end{bmatrix} \mid a\in \mathrm{im}(1-p)\cap\mathrm{im}(1-q)
		\right\}.
		\]
		Therefore, we have
		\[
		[p_1]=[1]-[p]-[q]\in \mathrm K_0(\Csr(\mathrm{PSL}(2, \mathbb Z))).
		\]
	\end{proof}

	As an immediate consequence of Theorem \ref{thm: Z_n*Z_m} we have the following result. 
	
	\begin{corollary} \label{cor: deloc l2 Betti Zn*Zm}
		The delocalised $\ell^2$-Betti numbers for $G = \mathbb Z_m \ast \mathbb Z_n $ are
		\begin{equation*}
			\beta^{(2)}_{1, \langle{g}\rangle}(G) = 
			\begin{cases}
				1-\frac{1}{m}-\frac{1}{n} &\qquad g=e\\
				-\frac{|\langle{g}\rangle|}{m}  &\qquad g \in \mathbb Z_m \setminus \{e\}\\
				-\frac{|\langle{g}\rangle|}{n}  &\qquad g \in \mathbb Z_n \setminus \{e\}\\
				0 \qquad & \qquad  \text{otherwise} 
			\end{cases}
		\end{equation*}
		and $\beta ^{(2)}_{k, \langle{g}\rangle}(G) = 0$ for $k \neq 1$ and $g\in G$.
	\end{corollary}
	
	\begin{proof}
		Due to hyperbolicity of $G$, the delocalised traces are well-defined on $\mathrm {K}_0(\Csr(G))$. The pairing between these traces and $[p_1]$ from Theorem \ref{thm: Z_n*Z_m} implies the desired result for delocalised $\ell^2$-Betti numbers in degree 1. In all other degrees this invariant vanishes. This is because by Lemma \ref{lem:existence}, we have $p_k = 0$ for $k\neq 1$.
	\end{proof}	

	In particular, when $m=2$ and $n=3$ we obtain the following result.	
	
	\begin{example} \label{example: deloc l2 Betti PSL2Z}
		The delocalised $\ell^2$-Betti numbers for $G = \mathrm{PSL}(2, \mathbb Z)$ are
		\begin{equation*}
			\beta^{(2)}_{1, \langle{g}\rangle}(G) = 
			\begin{cases}
				1/6 &\qquad g=e\\
				-1/2  &\qquad g\in \langle{s}\rangle\\
				-1/3  &\qquad g\in \langle{t}\rangle\\
				-1/3  &\qquad g\in \langle{t^2}\rangle\\
				0    &\qquad \text{otherwise} 
			\end{cases}
		\end{equation*}
		
		and $\beta ^{(2)}_{k, \langle{g}\rangle}(G) = 0$ for $k \neq 1$ and $g\in G$.
	\end{example}

	\subsection{Certain product groups}
	Consider a product of free groups with a finite group. We show that higher Kazhdan projections for such a group are not a multiple of the identity in any fixed degree. We further describe their K-classes. 
	
	We recall from \cite[Example 1.5.1]{linowakpooya2020} an argument showing that the K-class of $p_1$ in $\mathrm K_0(\Csr(\mathbb F_2))$ for the free group $\mathbb F_2$ is $[1]$. To view $p_1$ as a member of a matrix algebra over $\Csr(\mathbb F_2)$, a spectral gap for $\Delta_1$ is required.	For this we appeal to Lemma \ref{lem:spectral gap condition}. In fact,
	the cohomology group $\mathrm H^1(\mathbb F_2, \ell^ 2 (\mathbb F_2))$ is reduced because $\mathbb F_2$ is non-amenable, and
	$\mathrm H^2(\mathbb F_2, \ell^ 2 (\mathbb F_2))$ is reduced (even vanishes) because the classifying space $\mathrm{B}\mathbb F_2$ has a one-dimensional model (e.g.\ its presentation complex). Hence $\Delta_1$ has a spectral gap.
	Now since 
	$$\mathrm K_0(\Csr (\mathbb F_2)) =  \langle{[1]}\rangle \cong \mathbb Z,$$ we conclude that $[p_1] = k [1]$, for some $k\in \mathbb Z$. 
	Equation 
	(\ref{def:l2 betti number definition}) together with $\beta^{(2)}_ 1(\mathbb F_2)=1$ imply that 
	$$k=\tau([p_1]) =1,$$ 
	hence $[p_1] = [1]$. It follows that $p_1$ is even Murray-von Neumann equivalent to $1$.  See \cite{dykemahaageruprordam1997}

	\begin{lemma}\label{lem:spectral gap for productof groups}
		Let $G$ be a finite product of groups $G = G_1 \times \cdots \times G_k$. Assume that for $1\leq j \leq k$ there is a Hilbert chain complex such that the associated Laplacians $\Delta_i^{G_j}$ has spectral gap for all $i\in \mathbb N$. Then the Laplacians $\Delta_i ^{G}$ associated to the product complex of $G$ have spectral gap for all $i\in \mathbb N$.

	\end{lemma}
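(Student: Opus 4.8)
The plan is to argue by induction on $k$, which reduces the statement to the case $k=2$; write $G = G_1\times G_2$ and let $(C^{(1)}_\bullet,\partial^{(1)})$ and $(C^{(2)}_\bullet,\partial^{(2)})$ be the given Hilbert chain complexes, with $\Delta^{G_1}_p$ and $\Delta^{G_2}_q$ having spectral gaps at $0$ for all $p,q$. I would first identify the product complex of $G$ with the tensor-product complex $C^{(1)}_\bullet\otimes C^{(2)}_\bullet$ carrying the Koszul-signed differential $\partial(x\otimes y)=\partial^{(1)}x\otimes y+(-1)^{|x|}x\otimes\partial^{(2)}y$, the Hilbert space structure coming from $\ell^2(G_1\times G_2)\cong\ell^2(G_1)\otimes\ell^2(G_2)$. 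Thus $C^G_n=\bigoplus_{p+q=n}C^{(1)}_p\otimes C^{(2)}_q$ is, for each fixed $n$, a \emph{finite} orthogonal direct sum.

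The key step is the standard computation that, on the summand $C^{(1)}_p\otimes C^{(2)}_q$,
\[
\Delta^G_n\big|_{C^{(1)}_p\otimes C^{(2)}_q}=\Delta^{G_1}_p\otimes\mathrm{id}+\mathrm{id}\otimes\Delta^{G_2}_q .
\]
To see this one writes $\partial=\partial^{(1)}\otimes\mathrm{id}+\varepsilon\otimes\partial^{(2)}$ with $\varepsilon$ the grading operator ($\varepsilon=(-1)^p$ on $C^{(1)}_p$), expands $\Delta^G_n=\partial\partial^*+\partial^*\partial$, and observes that the mixed terms cancel because $\partial^{(1)}\varepsilon=-\varepsilon\partial^{(1)}$ and $(\partial^{(1)})^*\varepsilon=-\varepsilon(\partial^{(1)})^*$, while $\varepsilon^2=\mathrm{id}$ cleans up the diagonal ones. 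In particular $\Delta^G_n$ is block diagonal for the above decomposition, so it has a spectral gap at $0$ precisely when each block does, and there are only finitely many blocks.

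It then remains to analyse a single block $A:=\Delta^{G_1}_p\otimes\mathrm{id}+\mathrm{id}\otimes\Delta^{G_2}_q$. In our setting the differentials are matrices over $\mathbb{C}G_j$ acting on $\ell^2$, hence bounded, so $\Delta^{G_1}_p\otimes\mathrm{id}$ and $\mathrm{id}\otimes\Delta^{G_2}_q$ are commuting bounded positive self-adjoint operators whose joint spectrum is $\sigma(\Delta^{G_1}_p)\times\sigma(\Delta^{G_2}_q)$; applying the spectral mapping theorem to $(x,y)\mapsto x+y$ gives $\sigma(A)=\sigma(\Delta^{G_1}_p)+\sigma(\Delta^{G_2}_q)$. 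By hypothesis these spectra sit in $\{0\}\cup[c_p,\infty)$ and $\{0\}\cup[c'_q,\infty)$ with $c_p,c'_q>0$, and a short inspection of the four pieces of the sumset shows $\sigma(A)\subseteq\{0\}\cup[\min(c_p,c'_q),\infty)$. Taking the minimum over the finitely many pairs $(p,q)$ with $p+q=n$ yields a spectral gap for $\Delta^G_n$ for every $n\in\mathbb N$; the inductive step from $k-1$ to $k$ is the case $k=2$ applied to $(G_1\times\cdots\times G_{k-1})\times G_k$, using associativity of the product complex.

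The only point requiring genuine care is the sign bookkeeping in the second step — one must use the Koszul convention, which is precisely the one making $\Delta^G$ respect the bidegree — together with the easy but essential remark that for each fixed $n$ only finitely many summands $C^{(1)}_p\otimes C^{(2)}_q$ occur, so that ``minimum of the gaps'' is meaningful even though the individual summands may be infinite-dimensional. Everything else is routine operator theory of commuting bounded self-adjoint operators.
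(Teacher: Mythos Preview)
Your proposal is correct and follows essentially the same route as the paper: reduce by induction to $k=2$, identify the product complex with the Koszul tensor complex, show that $\Delta^G_n$ is block diagonal with blocks $\Delta^{G_1}_p\otimes\mathrm{id}+\mathrm{id}\otimes\Delta^{G_2}_q$, and conclude that a sum of commuting positive operators with spectral gap again has spectral gap. You supply more detail than the paper does---in particular the sign bookkeeping for the cross-term cancellation and the explicit spectral description $\sigma(A)=\sigma(\Delta^{G_1}_p)+\sigma(\Delta^{G_2}_q)\subseteq\{0\}\cup[\min(c_p,c'_q),\infty)$---whereas the paper simply asserts the final implication in one sentence.
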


    \begin{proof}
    	The result follows by induction once we prove it for the product of two groups $G = G_1 \times G_2$. 
    	Let $(C^{j,i})_{i\in \mathbb N}$ be Hilbert cochain complexes whose associated Laplacians $(\Delta_i^{G_j})_i$ have spectral gap. Then the coboundary maps of the associated product complex
    	$\bigoplus_{r+s=i} C^{1,r} \hat{\otimes} C^{2,s}$
    	are given by
    	\[
    		d^i = \bigoplus_{r+s=i} d^r \otimes I + (-1)^s I \otimes d^s,
    	\]
    	and in turn the Laplacian $\Delta^G_i$ is given by 
    	\begin{equation*} \label{equ:Delta_n}
    		\Delta^G_i = \bigoplus_{r+s=i} \Delta^{G_1}_r \otimes I + I \otimes \Delta^{G_2}_s.
    	\end{equation*}
     	By construction a Laplacian is a positive operator. Moreover, by our assumption, $\Delta_j ^{G_i}$'s have spectral gap for all $i\in \mathbb N$. Therefore, for all $i\in \mathbb N$ the Laplacian $\Delta_i ^ {G}$ is a sum of commuting positive operators with spectral gap and hence has spectral gap. 	
    \end{proof}	
	
	\begin{theorem} \label{thm:higher Kazh proj product group}
		Let $F$ be a finite group, let $n \in \mathbb N$ and consider the product
		$G = \mathbb F_2 \times \dotsc \times \mathbb F_2 \times F$ of $n$ factors of $\mathbb F_2$ with $F$. Then the K-class of the $n$-th higher Kazhdan projection $p_n$ of $G$ satisfies		
		\[
			[p_n] = \left[\frac{1}{|F|}{\sum_{g\in F}{g}}\right].
		\]
	\end{theorem}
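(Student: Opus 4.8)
The plan is to combine the K\"unneth-type description of the product complex from Lemma~\ref{lem:spectral gap for productof groups} with the known facts $[p_1^{\mathbb F_2}] = [1]$ in $\mathrm K_0(\Csr(\mathbb F_2))$ and $[p_0^F] = [\tfrac{1}{|F|}\sum_{g\in F} g]$ for the finite group $F$. First I would record that, by Lemma~\ref{lem:spectral gap for productof groups}, all Laplacians $\Delta_i^G$ of the product complex have a spectral gap, so every $p_i^G$ defines a class in $\mathrm K_0(\Csr(G))$; in particular $p_n^G$ makes sense. Next, the key structural observation: the space of harmonic $n$-cochains of a tensor product of Hilbert cochain complexes is the Hilbert tensor product of the spaces of harmonic cochains of the factors. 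This is because $\Delta^G_i = \bigoplus_{r+s=i}\Delta^{G_1}_r\otimes I + I\otimes \Delta^{G_2}_s$ is, in each summand, a sum of two commuting positive operators, so its kernel is $\ker\Delta^{G_1}_r \,\hat\otimes\, \ker\Delta^{G_2}_s$; iterating over the $n$ free factors and the factor $F$ (whose only nonzero harmonic space is in degree $0$, since $\overline{\mathrm H}^i(F,\ell^2(F)) = 0$ for $i>0$ and $=\mathbb C$ for $i=0$), the only surviving summand in degree $n$ is the one with $r_1=\dots=r_n=1$ and $s=0$. Hence
\[
\ker\Delta_n^G \;\cong\; \ker\Delta_1^{\mathbb F_2}\,\hat\otimes\,\dots\,\hat\otimes\,\ker\Delta_1^{\mathbb F_2}\,\hat\otimes\,\ker\Delta_0^{F},
\]
and correspondingly $p_n^G = p_1^{\mathbb F_2}\otimes\dots\otimes p_1^{\mathbb F_2}\otimes p_0^F$ as an operator on the tensored Hilbert modules.

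The second half is to promote this operator identity to a K-theory statement. Since $\Csr(G_1\times G_2) \cong \Csr(G_1)\otimes\Csr(G_2)$ (minimal tensor product; here all groups are exact, indeed the finite factor makes this harmless), the external product in K-theory gives $[p\otimes q] = [p]\times[q]$. Applying this $n$ times, I would conclude
\[
[p_n^G] \;=\; [p_1^{\mathbb F_2}]\times\dots\times[p_1^{\mathbb F_2}]\times[p_0^F]
\;=\; [1]\times\dots\times[1]\times\Bigl[\tfrac{1}{|F|}\sum_{g\in F} g\Bigr]
\;=\; \Bigl[\tfrac{1}{|F|}\sum_{g\in F} g\Bigr],
\]
using $[p_1^{\mathbb F_2}]=[1]$ (recalled just before the theorem) and that tensoring with the class of a rank-one projection (the unit) acts as the identity on $\mathrm K_0$. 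The class on the right is understood inside $\mathrm K_0(\Csr(G))$ via the inclusion $\Csr(F)\hookrightarrow\Csr(G)$ of the last tensor factor.

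The main obstacle I expect is the careful identification of the harmonic cochains of the product complex with the tensor product of the harmonic spaces, in a way that is compatible with the Hilbert-module (not merely Hilbert-space) structure, so that the resulting projection genuinely lies in $\mathrm M_{k_n}(\Csr(G))$ and its K-class is the external product — the spectral-gap input from Lemma~\ref{lem:spectral gap for productof groups} is what guarantees this, but one must check the bookkeeping of which degrees contribute and that the finite factor $F$ contributes only in degree $0$. A minor subtlety is choosing the model of $\mathrm B G$ as the product of one-dimensional models for the $\mathbb F_2$-factors with a model for $\mathrm B F$ (or equivalently using the tensor product chain complex directly, as in Lemma~\ref{lem:spectral gap for productof groups}), and noting that $p_0^F$ for the regular representation of $F$ is exactly the averaging projection $\tfrac{1}{|F|}\sum_{g\in F} g$, since $\ker\Delta_0^F = \overline{\mathrm H}^0(F,\ell^2(F))$ is the space of $F$-invariant vectors in $\ell^2(F)$, which is one-dimensional and spanned by the normalised indicator of $F$. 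Once these identifications are in place, the K-theory computation is a direct application of the external product and the two recalled base cases.
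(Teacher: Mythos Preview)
Your proposal is correct and follows the same core strategy as the paper: decompose $\ker\Delta_n^G$ via the K\"unneth formula for the product complex, identify the only contributing summand as $\ker\Delta_1^{\mathbb F_2}\,\hat\otimes\,\cdots\,\hat\otimes\,\ker\Delta_1^{\mathbb F_2}\,\hat\otimes\,\ker\Delta_0^F$, and then use $[p_1^{\mathbb F_2}]=[1]$. The paper organises this as an induction on $n$ (peeling off one $\mathbb F_2$ factor at a time) and finishes by exhibiting explicit Murray--von Neumann equivalences via tensor products of partial isometries, whereas you treat all factors at once and phrase the conclusion via the external product in $\mathrm K_0$; these are cosmetic differences. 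One point where your write-up is actually a bit cleaner: you rule out the unwanted summands by vanishing of \emph{harmonic} spaces (using $\overline{\mathrm H}^0(\mathbb F_2,\ell^2(\mathbb F_2))=0$ and $\overline{\mathrm H}^{>0}(F,\ell^2(F))=0$), whereas the paper's inductive step asserts that the \emph{cochain} spaces $C^s(K,\ell^2(K))$ vanish for $s\ge n+1$, which is not literally true for a nontrivial finite factor $F$ (any CW model of $\mathrm BF$ is infinite-dimensional) --- the conclusion is unaffected, but your argument sidesteps this imprecision.
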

	
	\begin{proof}
		Recall that the product resolution is calculating the cohomology of a product of groups. We prove the following statement by induction on the number $n$ of factors of $\mathbb F_2$. It immediately implies the theorem. For all $n\in \mathbb N$, the Laplacian associated with the product resolution of $n$ factors of $\mathbb{F}_2$ with $F$ has spectral gap in all degrees and the $n$-th Kazhdan projection is Murry-von Neumann equivalent in $\mathrm M_{\infty}(\Csr(G)) = \mathrm {M_{\infty}}(\Csr(\mathbb F_2) \otimes  \cdots \otimes \Csr(\mathbb F_2) \otimes \Csr(F))$ to 
		\[
			1\otimes \cdots \otimes 1\otimes \frac{1}{|F|}{\sum_{g\in F}{g}}.
		\]
		
		The claim about a spectral gap is the content of Lemma \ref{lem:spectral gap for productof groups}. We prove the claimed Murray-von Neumann equivalence.  The base case $n=0$ is trivial by a known calculation. Assume that the statement holds for $n$ factors of $\mathbb F_2$, we show it for $n+1$ factors. 
		Consider the product decomposition 
		\[
			G = \mathbb F_2 \times (\underbrace{\mathbb F_2 \times \dotsc \times \mathbb F_2}_{n \text{ times}} \times F) := H \times K.
		\]
		Considering the iterated product resolution, we have
		\[
			 C^{n+1}(H \times K, \ell^2(H \times K)) \cong 
			\bigoplus_{r+s=n+1} C^r(H, \ell^2(H)) \otimes C^s(K, \ell^2(K)).	
		\]
		For $r\geq 2$, $C^r(H, \ell^2(H))$ is zero, so the corresponding term in the direct sum vanishes. Moreover, when $s \geq n + 1$, the same happens to $C^s(K, \ell^2(K))$. Therefore, we find that
		\[
		C^{n+1}(H \times K, \ell^2(H \times K)) \cong C^1(H, \ell^2(H)) \otimes C^n(K, \ell^2(K))	
		\]
		and the associated Laplacian is 
		\[
		\Delta^G_{n+1} = \Delta_1^H \otimes I + I \otimes \Delta_n^K
		.
		\]
		The kernel of this sum of positive commuting operators is equal to the intersection of the kernels of $\Delta_1^H \otimes I$ and $I \otimes \Delta_n^K$. So the kernel projection is $p_{n+1}^G = (p_1^{H} \otimes 1) \cdot (1 \otimes p_n^K) = p_1^{H} \otimes p_n^K$.
		
		The induction hypothesis says that there is a partial isometry $v$ whose support projection is $p_n^{K}$ and whose range projection is 
		$1 \otimes \cdots \otimes 1\otimes \frac{1}{|F|}{\sum_{g\in F}{g}}$ with $n$ factors of the unit $1 \in \Csr(\mathbb F_2)$. Since $p_1^{H}$ is Murray-von Neumann equivalent to $1$, there is a partial isometry $u \in \mathrm M_\infty(\Csr(\mathbb F_2))$ whose support projection is $p_1^H$ and whose range projection is $1$. Considering the partial isometry $u\otimes v$, we have that
		\[
			p_{n+1}^{G}  \stackrel{MvN}{\sim} \underbrace{1\otimes \cdots \otimes 1}_{n+1 \, {\text{times}}}\otimes \frac{1}{|F|}{\sum_{f\in F}{f}}.
		\]
		This finishes the proof.
  \end{proof}

	\bibliographystyle{alpha}
	\bibliography{mybib}

	\vspace{2em}
	\begin{minipage}[t]{0.45\linewidth}
		\small
		Sanaz Pooya \\
		Institute of Mathematics\\
		University of Potsdam\\
		14476 Potsdam, Germany\\
		{\footnotesize sanaz.pooya@uni-potsdam.de}
		\\   
		\\
		\small 
		Hang Wang \\ Research Center of Operator Algebras \\
		East China Normal University\\
		Shanghai 200241, China \\
		{\footnotesize wanghang@math.ecnu.edu.cn}
	\end{minipage}
\end{document}